\newcommand*\patchAmsMathEnvironmentForLineno[1]{%
\expandafter\let\csname old#1\expandafter\endcsname\csname #1\endcsname
\expandafter\let\csname oldend#1\expandafter\endcsname\csname end#1\endcsname
\renewenvironment{#1}%
{\linenomath\csname old#1\endcsname}%
{\csname oldend#1\endcsname\endlinenomath}}%
\newcommand*\patchBothAmsMathEnvironmentsForLineno[1]{%
\patchAmsMathEnvironmentForLineno{#1}%
\patchAmsMathEnvironmentForLineno{#1*}}%
	\tikzstyle{frame} = [draw, -latex]
	\tikzstyle{line} = [draw]
	\tikzstyle{line2} = [draw, dashdotted]
	\tikzstyle{line3} = [draw, dashed]
	\tikzstyle{line3UD} = [draw, dashed]
	\tikzstyle{place} = [circle, draw=black, fill=white, thick, inner sep=2pt, minimum size=1mm]
	\tikzstyle{place2} = [circle, draw=black, fill=black, thick, inner sep=2pt, minimum size=1mm]
	\tikzstyle{placeRed} = [circle, draw=red, fill=red, thick, inner sep=2pt, minimum size=1mm]
	\tikzstyle{vertex} = [circle, draw=black, fill=black, thick, inner sep=2pt, minimum size=1mm]
\tikzstyle{decision} = [diamond, draw, fill=blue!20,
\tikzstyle{block1} = [rectangle, draw, text width=8em, text centered, minimum height=4em]
\tikzstyle{block2} = [rectangle, draw, text width=3em, text centered, minimum height=4em]
\tikzstyle{block3} = [rectangle, draw, text width=11em, text centered, minimum height=12em, dashed,black]
\tikzstyle{block4} = [rectangle, draw, text width=11em, text centered, minimum height=18em, dashed,black]
\tikzstyle{block5} = [rectangle, draw, text width=11em, text centered, minimum height=32em, dashed,black]
\tikzstyle{block6} = [rectangle, draw, text width=11em, text centered, minimum height=18.5em, dashed,black]
\tikzstyle{block7} = [rectangle, draw, text width=11em, text centered, minimum height=11.8em, dashed,black]
\tikzstyle{line01} = [draw, -latex']
\tikzstyle{line02} = [draw, latex'-latex']
\def\BState{\State\hskip-\ALG@thistlm}
\algnewcommand\algorithmicswitch{\textit{switch}}
\algnewcommand\algorithmiccase{\textit{case}}
\algnewcommand\algorithmicassert{\texttt{assert}}
\algnewcommand\Assert[1]{\State \algorithmicassert(#1)}%
\newtheorem{corollary}{\bfseries Corollary}
\newtheorem{definition}{\bfseries Definition }
\newtheorem{lemma}{\bfseries Lemma}
\newtheorem{remark}{\bfseries Remark}
\newtheorem{theorem}{\bfseries Theorem}
\newtheorem{proposition}{\bfseries Proposition}
\newtheorem{condition}{\bfseries Condition}
\newtheorem{assumption}{\bfseries Assumption}
\title{Controllable Subspaces in Structured Networks of Hierarchical Directed Acyclic Graphs: Controllability of Individual Nodes}
\author{Nam-Jin Park$^{1}$, Yeong-Ung Kim$^{1}$, Koog-Hwan Oh$^{2}$, and Hyo-Sung Ahn$^{1}$
\thanks{${}^{1}$School of Mechanical Engineering, Gwangju Institute of Science and Technology (GIST), Gwangju, Korea. E-mails: {\tt\small namjinpark@gist. ac.kr; yeongungkim@gm.gist.ac.kr; hyosung@gist.ac.kr; }}
\thanks{${}^{2}$Smart Electrics Research Center, Korea Electronics Technology Institute (KETI), Gwangju, Korea. E-mails: {\tt\small ohkhwan@keti.re.kr}}
}
\begin{document}
\maketitle

\begin{abstract} 
This paper introduces the concept of the Fixed Strongly Structurally Controllable Subspace (FSSCS) within the context of structured networks, enabling a comprehensive characterization of controllable subspaces. 
\textcolor{black}{From a graph-theoretical perspective, we define Fixed Strongly Structurally Controllable (FSSC) nodes, 
which are nodes (or states) that remain controllable for all network parameters within a structured network.}
Furthermore, we establish the necessary and sufficient conditions for identifying FSSC nodes in general graphs. 
This paper also proposes a method to \textcolor{black}{exactly determine the dimension of the Strongly Structurally Controllable Subspace (SSCS)} in hierarchical directed acyclic graphs, 
using a combination of graph-theoretical approaches and controllability matrix analyses. 
This method facilitates the identification of FSSC nodes and enhances our understanding of the robustness of node controllability against parameter variations in structured networks.
\end{abstract}

\begin{IEEEkeywords}
Fixed controllable node; fixed strongly structurally controllable subspace; strong structural controllability.
\end{IEEEkeywords}


\section{Introduction}
Network controllability, which is the ability to guide a complex system to a desired state, is a significant area of study in system dynamics and control theory. 
\textcolor{black}{Within the context of structured networks \cite{jia2020unifying}, the \textit{structured network} of a system, represented by $\dot{x}=Ax+Bu$, is characterized by the set of patterns $\{*,0,?\}$ in the system matrix $A$.
Here, $*$ denotes non-zero patterns, $0$ represents fixed zeros, and $?$ indicates arbitrary patterns that can take either non-zero values or remain fixed as zero.
Since most of the literature on controllability 
\cite{lin1974structural,mayeda1979strong,jarczyk2011strong,commault2017fixed,park2023kalman} 
focuses on structured networks defined by the non-zero/zero patterns $\{*,0\}$, this paper adopts the same framework. 
In this context, the variations in the values assigned to $*$ are referred to as the \textit{network parameters} of structured networks.
}
The controllability of these structured networks was first established by Lin \cite{lin1974structural} under the name of \textit{structural controllability},
which refers to the controllability of a structured network under almost all network parameters. 
The authors in \cite{dion2003generic} showed that if a structured network is controllable for a specific realization of network parameters, then it is controllable \textcolor{black}{for almost all network parameters.} 
This characteristic is known as the \textit{generic property} of structured networks, meaning that a structurally controllable network remains controllable under almost all network parameters.
However, it also implies that a structurally controllable network could become uncontrollable \textcolor{black}{for a certain realization of network parameters.}
To resolve this issue, the concept of \textit{strong structural controllability} \cite{mayeda1979strong} has been introduced. 
This approach provides a more robust framework compared to structural controllability, ensuring the controllability \textcolor{black}{for all realizations of network parameters.}
Furthermore, some studies consider topological perturbations in networks. For instance, \cite{monshizadeh2014zero,van2017distance} explore the perturbations of self-loops,
and \cite{mousavi2020strong} examines strong structural controllability's robustness to edge modifications for directed acyclic graphs.

For a deeper comprehension of controllability in structured networks, exploring the dimensions of controllable subspaces is important.
Within structured networks, the dimension of controllable subspace fluctuates based on network parameters, 
bounded by two extremes \cite{zhang2013upper}: the maximum dimension, represented by the dimension of \textit{Structurally Controllable Subspace (SCS)}, 
and the minimum dimension, indicated by the dimension of \textit{Strongly Structurally Controllable Subspace (SSCS)}.
In the existing literature, while the dimension of SCS is precisely determined by Hosoes's theory in \cite{hosoe1980determination}, the dimension of SSCS is still not fully explored. 
\textcolor{black}{
Most of the existing results primarily rely on the lower bounds of the dimension of SSCS for specific dynamics, such as Laplacian dynamics 
\cite{yaziciouglu2020strong,yazicioglu2012tight,yaziciouglu2016graph}.
More recently, research has been extended to finding the lower bounds of the dimension of SSCS for general dynamics as well \cite{monshizadeh2015strong,park2023kalman}. 
These studies employ various approaches, such as the derived set \cite{monshizadeh2015strong,yaziciouglu2020strong}, 
graph distance \cite{yazicioglu2012tight,yaziciouglu2016graph}.
\textcolor{black}{However, there is still no work that provides an exact determination of the dimension of SSCS, leaving it as an open problem.}
}

On the other hand, even if the dimension of the controllable subspace remains fixed, countless realizations of the controllable subspace may exist due to variations in network parameters.
Therefore, a state that is controllable in specific realizations of the controllable subspace might be uncontrollable in another.
To address this issue, the concept of the \textit{fixed controllable subspace} was first introduced by the authors in \cite{commault2017fixed}. 
This subspace is defined as the intersection of all possible controllable subspaces that change with the network parameters. 
More specifically, the concept they introduced is the \textit{Fixed Structurally Controllable Subspace (FSCS)}, which is based on structural controllability.
\textcolor{black}{Furthermore, they interpreted the FSCS from a graph point of view, which is the structural controllability of individual state nodes, referred to as \textit{Fixed Structurally Controllable (FSC) nodes}.
However, since the concept of FSCS is fundamentally based on structural controllability, which has the \textit{generic property}, 
such FSC nodes still carry control uncertainties under specific realizations of network parameters.}
To eliminate such uncertainties, it is essential to apply the concept of fixed controllable subspace within the framework of strong structural controllability.

This paper introduces the concept of the \textit{Fixed Strongly Structurally Controllable Subspace (FSSCS)}, grounded in the strong structural controllability.
From a graph-theoretical perspective, we also define \textit{Fixed Strongly Structurally Controllable (FSSC)} nodes and provide their necessary and sufficient conditions for general graphs.
\textcolor{black}{FSSC nodes remain controllable across all realizations of network parameters.
Understanding FSC and FSSC nodes not only enhances our ability to analyze control resilience against parameter variations but also offers practical applications in task assignment problems, 
where the criticality of individual nodes can inform the allocation of essential tasks.}
To achieve this, we introduce a novel approach that merges graph-theoretical insights with controllability matrix interpretations, 
enabling the determination of the dimension of SSCS in hierarchical directed acyclic graphs (HDAG).
\textcolor{black}{Hierarchical structures are actively studied in the field of Hierarchical Multi-Agent Systems (HMAS), 
such as in optimization \cite{roshanzamir2020new}, energy systems \cite{cai2011hierarchical}, and system resilience \cite{rieger2013hierarchical}, showcasing their versatility in modeling real-world hierarchical networks.
For further details and illustrative examples, please refer to the Supplementary Materials, which are referenced accordingly throughout this paper.}
The contributions of our paper can be summarized as follows:
\begin{itemize}
\item This paper extends the results of \cite{commault2017fixed} to strong structural controllability.
By introducing the concept of FSSCS for the first time, we provide the necessary and sufficient conditions for determining FSSC nodes in general graphs.
\item While \cite{yazicioglu2012tight,yaziciouglu2016graph,monshizadeh2015strong} focus on lower bounds for the dimension of SSCS, 
\textcolor{black}{this paper proposes a method for the exact determination of the dimension of SSCS for HDAGs.}
\end{itemize}

This paper is structured as follows:
\textit{Section~\ref{sec_pre}} covers foundational concepts and challenges in structured networks.
\textit{Section~\ref{sec_FSSCS}} introduces FSSCS.
\textit{Section~\ref{sec_cont}} explores the graph-theoretical meaning of the controllability matrix.
\textit{Section~\ref{sec_dim}} details the dimension of SSCS in HDAGs.
\textit{Section~\ref{sec_ex}} and \textit{Section~\ref{sec_conc}} present examples and conclusion, respectively.


\section{Preliminaries} \label{sec_pre}
Let us consider a network with $n$-states and $m$-inputs:
\begin{align} \label{eq_sys}
\dot{x} = \mathcal{A}x + \mathcal{B} u,
\end{align}
where $\mathcal{A} \in \mathbb{R}^{n \times n}$ is the system matrix and \textcolor{black}{$\mathcal{B} \in \mathbb{R}^{n \times m}$ is the input matrix consisting of $m$-standard basis vectors.}
\textcolor{black}{The network is said to be controllable if, for any initial state $x_0$ and any final state $x_f$, there exists a control input $u$ that can steer the system from $x_0$ to $x_f$ within a finite time interval.}
A condition for controllability is that the controllability matrix 
\begin{equation}\label{eq_cont}
\mathcal{C} = [\mathcal{B}, \mathcal{A}\mathcal{B}, \mathcal{A}^2\mathcal{B}, \cdots, \mathcal{A}^{n-1}\mathcal{B}] \in\mathbb{R}^{n \times nm}, 
\end{equation}
has full rank, i.e., $\text{rank}(\mathcal{C}) = n$.
\textcolor{black}{
The parameterized matrix $\mathcal{A}_\mathcal{P}$, reflecting the non-zero/zero patterns of $\mathcal{A}$, is defined as 
$\mathcal{A}_\mathcal{P} \in \mathbb{R}^{n \times n}$, where $[\mathcal{A}_\mathcal{P}]_{i,j} = 0$ if $[\mathcal{A}]_{i,j} = 0$ and 
$[\mathcal{A}_\mathcal{P}]_{i,j}= a_{ij}$ if $[\mathcal{A}]_{i,j} \neq 0$.
Here, the non-zero elements $a_{ij}$ can take any value except zero, which are called the \textit{network parameters}.
The structured network of \eqref{eq_sys} can be represented as:
\begin{align} \label{eq_sys_st}
\dot{x} = \mathcal{A}_\mathcal{P}x + \mathcal{B} u.
\end{align}}
The structured network in \eqref{eq_sys_st} can be represented as a graph:
\begin{equation} \label{eq_sys_graph}
\mathcal{G}(\mathcal{V},\mathcal{E}),
\end{equation}
where $\mathcal{V}$ denotes the set of state nodes, and $\mathcal{E}$ represents the set of edges. 
If $[\mathcal{A}_\mathcal{P}]_{j,i} \neq 0$, then there exists an edge $(i,j) \in \mathcal{E}$ with a non-zero weight, where $i, j \in \mathcal{V}$.
Furthermore, a state node $i \in \mathcal{V}$ connected to an input $u$ with an edge $(u, i) \in \mathcal{E}$, is referred to as a \textit{leader}. 
The set of such leaders within $\mathcal{G}(\mathcal{V},\mathcal{E})$ is represented as $\mathcal{V}_{\mathcal{L}} \subseteq \mathcal{V}$.
\textcolor{black}{
A \textit{path} from state node $i_1$ to state node $i_p$ is defined as a sequence of connected edges $(i_1, i_2), (i_2, i_3), \ldots, (i_{p-1}, i_p)$,
where $i_k \in \mathcal{V}$ for $k \in \{1, \ldots, p\}$, and $(i_{k}, i_{k+1}) \in \mathcal{E}$ for $k \in \{1, \ldots, p-1\}$, with no node revisited along the path.}
This path can also be represented as a node sequence denoted by $(i_1 \rightarrow i_2 \rightarrow \ldots \rightarrow i_p)$. 
A path consisting of $k$ edges is said to have $k$-steps, thus defining the length of the path in terms of the number of transitions between nodes.
A path that initiates from a leader, i.e., $i_1 \in \mathcal{V}_{\mathcal{L}}$, is called a \textit{stem}, and a path where $i_1 = i_p$ is referred to as a \textit{cycle}.
A graph without any cycles is classified as \textit{Directed Acyclic Graph (DAG)}.
We say that a graph $\mathcal{G}(\mathcal{V},\mathcal{E})$ is \textit{input-connected} if there exists a stem leading to each node in $\mathcal{V}$. 
Input-connectedness ensures that the influence of inputs (or leaders) propagates through the entire network, affecting all state nodes. Therefore, input-connectedness is a necessary condition for controllability.
For a graph $\mathcal{G}(\mathcal{V},\mathcal{E})$ with a leader, 
consider $\mathcal{V}_k$ as the set of nodes that can be reached from the leader with $k$-steps, for $k \in \{1, \ldots, p\}$,
where $p$ is the number of steps required to reach the node that is the farthest from the leader.
If the graph is input-connected, then the entire set of nodes $\mathcal{V}$ is the union of these reachable sets, i.e., $\mathcal{V} = \bigcup_{k=1}^p \mathcal{V}_k$. 
A graph is a \textit{Hierarchical Directed Acyclic Graph (HDAG)} if the sets $\mathcal{V}_k$ are mutually disjoint for $k \in \{1, \ldots, p\}$.

\begin{remark}
\textcolor{black}{
From \cite{liu2012control,park2024fixed}, every input-connected DAG can be represented as a hierarchical structure with distinct layers, where edges are directed from higher to lower layers. 
A key characteristic of DAGs' hierarchical structure is that it allows edges between non-adjacent layers, enabling flexible interactions across multiple levels. 
In contrast, HDAGs restrict edges to only connect consecutive layers, enforcing a top-down hierarchical flow. 
While HDAGs retain most of the properties of DAGs, they ensure a well-defined hierarchical structure between layers.
For more details, refer to \textit{Section II} in the Supplementary Materials.
}
\end{remark}

The structured network given by \eqref{eq_sys_st} achieves \textit{structural controllability} when it is controllable under almost all numerical realizations of network parameters.
Furthermore, it reaches \textit{strong structural controllability} when it remains controllable under \textcolor{black}{all numerical realizations of network parameters.}
For detailed examples illustrating these two concepts of controllability with respect to network parameters, check \textit{Example 1} in the Supplementary Materials.

As shown in \cite{commault2017fixed}, the rank of the controllability matrix for a structured network can vary depending on the network parameters.
This variation in rank directly influences the dimension of controllable subspace, which is the column space of the controllability matrix.
To describe these variations, we introduce two key concepts: the \textit{Structurally Controllable Subspace (SCS)} \cite{commault2017fixed} and the \textit{Strongly Structurally Controllable Subspace (SSCS)}.
\textcolor{black}{
The SCS is the controllable subspace with the maximum dimension, while the SSCS is the controllable subspace with the minimum dimension. 
Note that controllable subspaces with the same dimension can exist in countless realizations depending on the network parameters. 
Therefore, there can be countless realizations of both SCS and SSCS. 
Despite these realizations, we denote the SCS as $\mathcal{C}_\Lambda$ and the SSCS as $\mathcal{C}_\Gamma$ to provide a consistent notation with \cite{commault2017fixed}.}
For a structured network represented by \eqref{eq_sys_st}, the dimension of controllable subspace (or rank of the controllability matrix) is constrained by the following boundary condition:
\begin{align} \label{ex_1_dim}
|\mathcal{C}_\Gamma| \leq {rank}(\mathcal{C}) \leq |\mathcal{C}_\Lambda| \leq n,
\end{align}
where $|\mathcal{C}_\Lambda|$ and $|\mathcal{C}_\Gamma|$ represent the dimensions of the respective controllable subspaces.
In the framework of structured networks, even if the dimension of the controllable subspace remains fixed, 
the subspace itself can still change due to variations in network parameters. 
This means that a state that is controllable within one realization of the controllable subspace may become uncontrollable in another.
To address this issue, the concept of a fixed controllable subspace becomes crucial.
For a deeper understanding of the fixed controllable subspace, check \textit{Example 2} in the Supplementary Materials.

\subsection{Fixed Structurally Controllable Subspace (FSCS)}
In the context of structural controllability, the authors in \cite{commault2017fixed} first introduced the notion of \textit{Fixed Structurally Controllable Subspace (FSCS)}, which is the intersection of all realizations of SCS.
This subspace represents a consistent controllable subspace for almost all network parameters.
The relationship between SCS and FSCS can be established as follows \cite{commault2017fixed}:
\begin{equation} \label{eq_fixed1}
\mathcal{C}^\mathcal{F}_\Lambda \subseteq \mathcal{C}_{{\Lambda}} \subseteq \mathbb{R}^n,
\end{equation}
where $\mathcal{C}_{{\Lambda}}$ and $\mathcal{C}_\Lambda^\mathcal{F}$ are the SCS and FSCS, respectively.

Drawing on graph theory, we delve into the concept of \textit{Fixed Structurally Controllable (FSC) nodes}, as introduced in \cite{commault2017fixed}. 
For a graph $\mathcal{G}(\mathcal{V},\mathcal{E})$, a state node $i \in \mathcal{V}$ is determined as an FSC node 
if its corresponding vector $v_i \in \mathbb{R}^n$ is included within the FSCS, where $v_i$ is the standard basis vector with a non-zero value only in its $i$-th element.
\textcolor{black}{Considering that the FSCS is the intersection of all realizations of SCS, 
a standard basis vector corresponding to a state node being included in the FSCS means that the state node is controllable in all realizations of SCS. 
However, note that since FSC nodes are based on SCSs, which are the controllable subspaces for almost all network parameters, they may become uncontrollable for specific realizations of network parameters.}
\textcolor{black}{Without loss of generality, we assume that the graph is input-connected throughout this paper.
Before introducing the condition for FSC nodes, let us consider the existing result in \cite{hosoe1980determination}:}

\begin{proposition}\label{thm_hosoe}
\textcolor{black}{
\cite{hosoe1980determination}
For a graph $\mathcal{G}(\mathcal{V},\mathcal{E})$, 
the dimension of SCS is the maximum number of state nodes that can be covered by a disjoint set of stems and cycles in $\mathcal{G}(\mathcal{V},\mathcal{E})$.}
\end{proposition}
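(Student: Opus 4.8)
The plan is to reduce the statement to a computation of the \emph{generic rank} of the controllability matrix and then to match this rank against the combinatorial cover. By definition the dimension $|\mathcal{C}_\Lambda|$ of the SCS is the largest value of $\text{rank}(\mathcal{C})$ attained over all realizations of the network parameters in $\mathcal{A}_\mathcal{P}$. Since every $r\times r$ minor of $\mathcal{C}=[\mathcal{B},\mathcal{A}_\mathcal{P}\mathcal{B},\ldots,\mathcal{A}_\mathcal{P}^{n-1}\mathcal{B}]$ is a polynomial in the parameters $a_{ij}$, such a minor either vanishes identically or is nonzero on an open dense set; hence the maximal rank is achieved generically. It therefore suffices to show that the largest $r$ for which some $r\times r$ minor of $\mathcal{C}$ is a nonzero polynomial equals the maximum number of nodes covered by a vertex-disjoint family of stems and cycles in $\mathcal{G}(\mathcal{V},\mathcal{E})$.

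For the upper bound I would fix any $r\times r$ minor of $\mathcal{C}$ and expand its determinant, using the walk interpretation of the powers of $\mathcal{A}_\mathcal{P}$: the $(i,\ell)$ entry of $\mathcal{A}_\mathcal{P}^{k}\mathcal{B}$ is, up to parameters, a sum over walks of length $k$ from the leader indexed by $\ell$ to node $i$. Each monomial surviving in the determinant expansion therefore encodes a family of walks, one terminating at each of the $r$ selected rows. A structured cancellation argument (a determinant-expansion analogue of the Lindstr\"om--Gessel--Viennot principle) shows that only vertex-disjoint families survive and that any such surviving family decomposes into stems (walks originating at leaders) and cycles. Consequently a nonzero $r\times r$ minor forces the existence of a disjoint stem-and-cycle family covering $r$ nodes, giving $\text{rank}(\mathcal{C})\le$ (maximum cover) for every realization.

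For achievability I would run the correspondence in reverse. Given a maximal disjoint stem-and-cycle family covering $r^\ast$ nodes, I would select the rows indexed by these nodes together with the columns corresponding to the appropriate powers of $\mathcal{A}_\mathcal{P}$ along each stem and cycle (here input-connectedness guarantees every cycle is reachable from some leader, so the required powers are available), so that the resulting $r^\ast\times r^\ast$ minor contains a monomial in the $a_{ij}$ produced by this family alone. Because the cover is vertex-disjoint, this monomial cannot be cancelled by any competing term, so the minor is a nonzero polynomial and is nonzero for generic parameters. This yields $|\mathcal{C}_\Lambda|\ge r^\ast$, and together with the upper bound establishes equality.

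The main obstacle is the combinatorial core of the upper bound: showing rigorously that every nonvanishing monomial in the determinant expansion reduces to a \emph{vertex-disjoint} union of stems and cycles, and that self-intersecting or overlapping walks either cancel in sign or collapse to a strictly smaller cover. Managing the interplay between the chosen powers of $\mathcal{A}_\mathcal{P}$ (the walk lengths) and the layered reachability structure is where the technical weight lies; the achievability direction, by contrast, is mostly a matter of exhibiting one well-chosen non-cancelling monomial.
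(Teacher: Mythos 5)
The paper itself contains no proof of this proposition: it is Hosoe's theorem, imported verbatim with the citation \cite{hosoe1980determination} and used as a black box, so your attempt has to stand on its own as a proof of that classical result. Your genericity reduction (every minor of $\mathcal{C}$ is a polynomial in the $a_{ij}$, so the maximal rank is attained generically) is correct, and your overall plan is in the spirit of Poljak's proof via the time-expanded (``dynamic'') graph. However, the combinatorial core of your upper bound is false as stated. You claim an LGV-style involution leaves only walk families that are vertex-disjoint in $\mathcal{G}(\mathcal{V},\mathcal{E})$. Consider the directed path $u \to 1 \to 2 \to 3$: the full $3\times 3$ controllability minor equals $a_{21}^{2}a_{32}\neq 0$, yet its unique surviving walk family (the length-$0$ walk at node $1$, the walk $1\to 2$, and the walk $1\to 2\to 3$) has all three walks passing through node $1$. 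The reason is that the tail-swapping involution must respect the walk lengths prescribed by the fixed columns of the minor, so it can only cancel a pair of walks meeting at a common vertex with the same number of steps remaining to their endpoints; walks that overlap at different ``times'' do not cancel. Consequently, a nonvanishing minor certifies a vertex-disjoint linking in the time-expanded graph, not a vertex-disjoint family of stems and cycles in the static graph, and the translation from such linkings to static stem-and-cycle covers is precisely the hard content of Hosoe's and Poljak's theorems. That translation is exactly the step you defer to ``a structured cancellation argument'' and then flag again as ``the main obstacle,'' so the actual substance of the theorem is absent.

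The achievability half has a parallel gap. For a cycle in the cover, the walks reaching its nodes must first travel along a connecting path from a leader (input-connectedness guarantees such a path exists, but it is not part of the cover and may pass through covered stem nodes) and then wind around the cycle; the resulting monomial is therefore not supported on the cover edges alone, and ``the cover is vertex-disjoint, hence no competing term cancels it'' is not a valid inference for those entries. The standard repairs are Lin's cactus construction or an explicit assignment of parameter values realizing rank at least $r^{\ast}$, either of which requires an argument you have not supplied. In short: correct framing and a correct genericity step, but the two steps carrying all the weight are asserted rather than proved, and the first is stated in a form that a three-node path already refutes.
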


Note that while the disjoint sets of stems and cycles within a graph may not be uniquely determined, the maximum number of state nodes contained in such a set remains unique.
Based on \textit{Proposition~\ref{thm_hosoe}}, the following proposition in \cite{commault2017fixed} provides the graph-theoretical condition for determining the FSC nodes.

\begin{proposition}\label{thm_commault}\textcolor{black}{
\cite{commault2017fixed}
For a graph $\mathcal{G}(\mathcal{V},\mathcal{E})$, 
a state node $k\in\mathcal{V}$ is an FSC node if and only if $k$ becoming a leader with additional input 
does not increase the dimension of SCS.}
\end{proposition}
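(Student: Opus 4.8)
My plan is to reduce the statement to a purely combinatorial condition on maximum covers via Proposition~\ref{thm_hosoe}, and then to bridge that condition to the subspace definition of the FSCS. Write $d := |\mathcal{C}_\Lambda|$ for the dimension of the SCS of $\mathcal{G}(\mathcal{V},\mathcal{E})$, and let $d_k$ be the dimension of the SCS of the graph obtained by making $k$ a leader through one additional input edge. By Proposition~\ref{thm_hosoe}, $d$ and $d_k$ equal the maximum number of state nodes coverable by a disjoint family of stems and cycles in $\mathcal{G}$ and in the augmented graph, respectively. I would call such a maximal family a \emph{maximum cover} and say a node is \emph{covered} if it lies on one of its stems or cycles. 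In this language the claim to prove is exactly: $k$ is an FSC node if and only if $d_k = d$.

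First I would prove the combinatorial lemma that $d_k > d$ if and only if some maximum cover of $\mathcal{G}$ leaves $k$ uncovered. One direction is constructive: if a maximum cover $M$ of $\mathcal{G}$ omits $k$, then $M$ together with the singleton stem $(k)$ rooted at the new input is a disjoint cover of the augmented graph of size $|M|+1$, so $d_k \ge d+1$. For the converse, a single new leader contributes at most one new stem root, so any maximum cover of the augmented graph attaining $d_k > d$ must root a stem $S_k=(k\rightarrow\cdots)$ at $k$; an augmenting/exchange argument applied to the matching reformulation of Proposition~\ref{thm_hosoe} then reroutes the remaining stems and cycles to produce a maximum cover of $\mathcal{G}$ of size $d$ that omits $k$. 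The same argument shows $d_k\le d+1$, so in fact $d_k\in\{d,d+1\}$, and the standing input-connectedness assumption is what guarantees the rerouted family is a valid cover of $\mathcal{G}$.

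Next I would connect covers to realized controllable subspaces, proving that $v_k\in\mathcal{C}_\Lambda^{\mathcal{F}}$ if and only if every maximum cover of $\mathcal{G}$ covers $k$. Because the FSCS is the intersection of all realizations of the SCS, I would argue both inclusions by genericity. If $k$ is covered by every maximum cover, then every realization of the network parameters attaining $\mathrm{rank}(\mathcal{C})=d$ forces $v_k$ into $\mathrm{im}(\mathcal{C})$; hence $v_k$ belongs to every realization of the SCS and therefore to their intersection $\mathcal{C}_\Lambda^{\mathcal{F}}$. Conversely, if some maximum cover omits $k$, I would exhibit a realization of rank $d$ whose controllable subspace does not contain $v_k$, so that $v_k\notin\mathcal{C}_\Lambda^{\mathcal{F}}$ and $k$ is not FSC.

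Chaining the two equivalences yields the proposition: $k$ is FSC $\iff$ every maximum cover of $\mathcal{G}$ covers $k$ $\iff$ no maximum cover omits $k$ $\iff d_k=d$, which is precisely the statement that making $k$ a leader does not increase the dimension of the SCS. I expect the genericity step of the previous paragraph to be the main obstacle, since it requires upgrading the discrete condition ``covered by every maximum cover'' to the algebraic statement that $v_k$ survives in $\mathrm{im}(\mathcal{C})$ for \emph{all} maximal-rank parameter realizations, not merely for a generic one; the exchange argument in the combinatorial lemma is a secondary difficulty, as one must ensure the rerouting neither revisits a node nor reuses the new input.
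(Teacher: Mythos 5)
Your reduction has the right overall shape, but the load-bearing step is missing. Note first that the paper does not actually prove Proposition~\ref{thm_commault}; it imports it from \cite{commault2017fixed}, so the natural benchmark is the paper's proof of its FSSC analogue, Theorem~\ref{thm_FSSC}, which is a short algebraic argument about the augmented pair $(\mathcal{A}_\mathcal{P},\mathcal{B}_{v_i})$, not a combinatorial one. Measured against that, your plan routes everything through Hosoe's cover characterization and then leaves the genuinely hard equivalence --- ``every maximum cover covers $k$'' $\iff$ ``$v_k$ lies in \emph{every} maximal-rank realization of the controllable subspace'' --- to an unspecified genericity argument. You flag this yourself as the main obstacle, but flagging it does not close it: genericity only yields statements for \emph{almost all} parameter realizations, whereas membership in $\mathcal{C}^\mathcal{F}_\Lambda$ demands a statement for \emph{all} realizations attaining the maximal rank $d$. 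As written, your step 2 cannot be completed by genericity alone, and that step is precisely the content of the proposition.

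The missing mechanism is an augmented-pair squeeze, and once you have it the combinatorial detour becomes unnecessary. Let $d$ and $d_k$ denote the SCS dimensions of $(\mathcal{A}_\mathcal{P},\mathcal{B})$ and of $(\mathcal{A}_\mathcal{P},[\mathcal{B},v_k])$. If $d_k=d$, take \emph{any} realization $\lambda$ with $\operatorname{rank}(\mathcal{C})=d$ and controllable subspace $C_\lambda$; the augmented controllable subspace contains both $C_\lambda$ and $v_k$, and its dimension is at most $d_k=d$ simply because $d_k$ is by definition the maximum over all realizations; the dimension squeeze forces the augmented subspace to equal $C_\lambda$, hence $v_k\in C_\lambda$ for every maximal-rank $\lambda$ --- no upgrade from generic to all is needed, only the definition of $d_k$ as a maximum. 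Conversely, if $d_k>d$, pick $\lambda$ simultaneously generic for both pairs (the intersection of two generic parameter sets is nonempty), so that $\dim C_\lambda=d$ while the augmented subspace has dimension $d_k>d$; since $C_\lambda$ is $\mathcal{A}_\lambda$-invariant and contains $\operatorname{im}\mathcal{B}$, assuming $v_k\in C_\lambda$ would trap the augmented subspace inside $C_\lambda$, a contradiction, so $v_k\notin C_\lambda$ and $k$ is not FSC. This invariance argument is exactly the engine of the paper's proof of Theorem~\ref{thm_FSSC} (with ``all parameters'' in place of ``maximal-rank parameters''), and it is what your step 2 needs. Your step 1 --- the exchange lemma showing $d_k\in\{d,d+1\}$ with equality $d_k=d$ iff every maximum cover hits $k$ --- is believable via the bipartite-matching reformulation, but after the fix above it is no longer needed to prove the proposition at all.
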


\textcolor{black}{
Note that \textit{Proposition~\ref{thm_hosoe}} can be solved using the maximum matching algorithm \cite{hopcroft1973n}, which has polynomial complexity. Consequently, \textit{Proposition~\ref{thm_commault}} also has polynomial complexity.
For a detailed complexity analysis, please refer to \textit{Section V.A} in our Supplementary Materials.}

\section{Fixed Strongly Structurally Controllable Subspace}\label{sec_FSSCS}
In the context of strong structural controllability, this section first introduces the \textit{Fixed Strongly Structurally Controllable Subspace (FSSCS)},
an extension of the FSCS concept presented in \cite{commault2017fixed}.
From a graph-theoretical perspective, we provide the necessary and sufficient conditions for identifying \textit{Fixed Strongly Structurally Controllable (FSSC)} nodes in general graphs.
The FSSCS is determined by the intersection of all realizations of SSCS, which offers the most robust framework of controllable subspace within structured networks, 
ensuring controllability across all network parameters.
The relationship between SSCS and FSSCS can be established as follows:
\begin{equation} \label{eq_fixed1}
\mathcal{C}^\mathcal{F}_\Gamma \subseteq \mathcal{C}_{{\Gamma}} \subseteq \mathbb{R}^n,
\end{equation}
where $\mathcal{C}_{\Gamma}$ and $\mathcal{C}_\Gamma^\mathcal{F}$ are the SSCS and FSSCS, respectively.
The relationship between the FSCS and FSSCS is given by:
\begin{equation} \label{eq_fixed2}
\mathcal{C}^\mathcal{F}_\Gamma \subseteq \mathcal{C}^\mathcal{F}_\Lambda \subseteq \mathbb{R}^n.
\end{equation}
Note that when the dimensions of SCS and SSCS coincide, the FSCS and FSSCS form identical controllable subspaces. 
For the necessity of the FSSCS compared to FSCS, please refer to \textit{Example 4} in the Supplementary Materials.

From a graph-theoretical perspective, we introduce the concept of FSSC nodes, which is a stronger concept than FSC nodes.
For a graph $\mathcal{G}(\mathcal{V},\mathcal{E})$, we define a state node $i \in \mathcal{V}$ as 
an FSSC node if its corresponding standard basis vector $v_i \in \mathbb{R}^n$ is a part of the FSSCS.
To further understand this, let us consider the SSCS, denoted as $C_{\Gamma}$, which corresponds to the pair $(\mathcal{A}_\mathcal{P},\mathcal{B})$ in \eqref{eq_sys_st}. 
To analyze how the inclusion of a state node $i$ as an additional leader affects the SSCS,
we introduce an augmented input matrix defined as ${\mathcal{B}}_{v_i}=[\mathcal{B},v_i]$.
For an augmented pair $(\mathcal{A}_\mathcal{P}, \mathcal{B}_{v_i})$, the corresponding SSCS is:
\begin{equation}\label{eq}
C_{\Gamma v_i}= \text{Column space}[{\mathcal{B}}_{v_i}, \mathcal{A}_\mathcal{P}{\mathcal{B}}_{v_i}, ..., \mathcal{A}_\mathcal{P}^{n-1}\mathcal{B}_{v_i}].
\end{equation} 

The following theorem provides the necessary and sufficient conditions for FSSC nodes in a general graph.

\begin{theorem}\label{thm_FSSC}
For a graph $\mathcal{G}(\mathcal{V},\mathcal{E})$, 
a state node $i\in\mathcal{V}$ is an FSSC node if and only if $i$ becoming a leader with additional input 
does not increase the dimension of SSCS.
\end{theorem}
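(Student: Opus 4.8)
The plan is to reduce the statement to a question about the membership of the basis vector $v_i$ in the minimum-dimensional controllable subspaces, and then to treat the two implications separately. Fix a numerical realization of the network parameters, write $\mathcal{A}$ for the resulting system matrix, and let $\mathcal{C}_\theta$ denote its controllable subspace, i.e.\ the smallest $\mathcal{A}$-invariant subspace containing the columns of $\mathcal{B}$. The first observation I would record is purely linear-algebraic: augmenting the input matrix to $\mathcal{B}_{v_i}=[\mathcal{B},v_i]$ replaces $\mathcal{C}_\theta$ by the smallest $\mathcal{A}$-invariant subspace containing both $\mathcal{C}_\theta$ and $v_i$; since $\mathcal{C}_\theta$ is already $\mathcal{A}$-invariant, this new subspace equals $\mathcal{C}_\theta$ when $v_i\in\mathcal{C}_\theta$ and has strictly larger dimension otherwise. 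Hence, for each individual realization, adding $i$ as a leader increases the controllable dimension if and only if $v_i\notin\mathcal{C}_\theta$. Writing $d=|\mathcal{C}_\Gamma|$ for the dimension of the SSCS, $\Theta_{\min}$ for the set of realizations attaining this minimum, and $d_i=|C_{\Gamma v_i}|$ for the corresponding quantity of the augmented pair $(\mathcal{A}_\mathcal{P},\mathcal{B}_{v_i})$, the condition ``$i$ becoming a leader does not increase the dimension of SSCS'' is exactly $d_i=d$ (augmenting the input can never decrease a dimension, so $d_i\ge d$ always).

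For the forward implication I would assume $i$ is an FSSC node, so that $v_i$ lies in the FSSCS $\mathcal{C}^{\mathcal{F}}_\Gamma=\bigcap_{\theta\in\Theta_{\min}}\mathcal{C}_\theta$. Then $v_i\in\mathcal{C}_\theta$ for every minimum-dimensional realization, so by the observation above the augmented controllable dimension at each such $\theta$ is still $d$. Combined with $d_i\ge d$ and the fact that every realization outside $\Theta_{\min}$ already has controllable dimension at least $d+1$ (so its augmented dimension is also at least $d+1$), the minimum of the augmented dimension over all realizations is exactly $d$; that is, $d_i=d$, and the dimension of the SSCS does not increase.

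The backward implication is where the real difficulty lies. Assuming $d_i=d$, one immediately obtains a single witness: some realization $\theta^{*}$ with augmented dimension $d$, which forces $\theta^{*}\in\Theta_{\min}$ and $v_i\in\mathcal{C}_{\theta^{*}}$. To conclude that $i$ is an FSSC node I must upgrade this to $v_i\in\mathcal{C}_\theta$ for \emph{every} $\theta\in\Theta_{\min}$, i.e.\ establish the all-or-nothing lemma: if the basis vector $v_i$ belongs to one minimum-dimensional controllable subspace, it belongs to all of them. This is the main obstacle. It cannot be obtained by the genericity shortcut that works for the SCS case, where the extremal dimension is the maximum and ``one cannot exceed the maximum'' instantly propagates membership across the open, dense generic set; here the extremal dimension is the minimum, it is attained only on a thin set $\Theta_{\min}$, and ``$v_i\in\mathcal{C}_\theta$'' is a Zariski-closed condition, so holding at a single point of $\Theta_{\min}$ gives no free propagation.

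My proposed route around this obstacle is to show that the set of state nodes whose basis vectors lie in the minimum-dimensional controllable subspace is determined by the graph $\mathcal{G}(\mathcal{V},\mathcal{E})$ alone, independently of which minimizing realization is chosen. Concretely, I would give a parameter-free, graph-theoretic description of the set of basis vectors $v_j$ contained in $\mathcal{C}^{\mathcal{F}}_\Gamma$, using the correspondence between powers of $\mathcal{A}_\mathcal{P}$ and walks in $\mathcal{G}(\mathcal{V},\mathcal{E})$ (the controllability-matrix interpretation developed later in the paper), tracking which coordinates can never be isolated after the rank-minimizing cancellations and which are forced to survive in every minimal realization. Proving that this node set is invariant across $\Theta_{\min}$ yields the lemma, and the backward implication then follows: $d_i=d$ places $i$ in this graph-determined set, hence $v_i\in\mathcal{C}_\theta$ for all $\theta\in\Theta_{\min}$ and $v_i\in\mathcal{C}^{\mathcal{F}}_\Gamma$, so $i$ is an FSSC node.
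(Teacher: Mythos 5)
Your forward implication is correct, and it is essentially the paper's own argument: realization by realization, augmenting $\mathcal{B}$ with $v_i$ preserves the controllable subspace exactly when $v_i$ already lies in it, so membership of $v_i$ in every minimum-dimensional controllable subspace keeps the augmented minimum at $d$. The genuine gap is exactly where you locate it, the backward implication, but your proposed repair cannot succeed: the ``all-or-nothing lemma'' you plan to establish is false. Concretely, take the paper's Fig.~1(b) and append two tail nodes: vertices $\{1,\dots,9\}$, input at node $1$, edges $(1,2),(1,3),(2,4),(2,5),(3,5),(4,6),(5,6),(5,7),(6,8),(7,9)$. Writing $\alpha=a_{12}a_{25}+a_{13}a_{35}$ and $\gamma=a_{12}a_{24}a_{46}+\alpha a_{56}$, the nonzero columns of $\mathcal{C}$ are $v_1$, $a_{12}v_2+a_{13}v_3$, $a_{12}a_{24}v_4+\alpha v_5$, $\gamma v_6+\alpha a_{57}v_7$, and $\gamma a_{68}v_8+\alpha a_{57}a_{79}v_9$. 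Since $\alpha=0$ forces $\gamma=a_{12}a_{24}a_{46}\neq 0$, all five columns are nonzero for every realization, so $\operatorname{rank}(\mathcal{C})\equiv 5$: every realization attains the SSCS dimension $d=5$, i.e.\ $\Theta_{\min}$ is the entire parameter set. For realizations with $\alpha=0$ the controllable subspace is $\operatorname{span}\{v_1,\,a_{12}v_2+a_{13}v_3,\,v_4,\,v_6,\,v_8\}$, which contains $v_4$; for realizations with $\gamma=0$ (hence $\alpha\neq 0$) it is $\operatorname{span}\{v_1,\,a_{12}v_2+a_{13}v_3,\,a_{12}a_{24}v_4+\alpha v_5,\,v_7,\,v_9\}$, which does not contain $v_4$. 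So $v_4$ belongs to some minimum-dimensional controllable subspaces but not all: node $4$ is not an FSSC node (indeed it is not even FSC, since adding it as a leader raises the SCS dimension from $5$ to $8$ via the disjoint stems $1\rightarrow 3\rightarrow 5\rightarrow 7\rightarrow 9$ and $4\rightarrow 6\rightarrow 8$). Yet $d_4=d$: at $\alpha=0$ the vectors $v_4$, $a_{46}v_6$, $a_{46}a_{68}v_8$ contributed by the extra input already lie in that realization's subspace, so the augmented dimension there is still $5$, and it can never drop below $5$. Hence the aggregate criterion ``adding node $4$ as a leader does not increase the dimension of SSCS'' holds while $v_4\notin\mathcal{C}^{\mathcal{F}}_\Gamma$; the single witness $\theta^{*}$ you extract from $d_i=d$ genuinely cannot be upgraded.

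It is worth seeing how the paper's proof relates to this. The paper's ``if'' direction never starts from the dimension condition $d_i=d$; it starts from the assumption that $C_{\Gamma v_i}=C_{\Gamma}$ \emph{for all network parameters}, i.e.\ from the pointwise condition which, by your own first observation, is equivalent realization-by-realization to $v_i\in\mathcal{C}_\theta$. Under that pointwise reading the equivalence is nearly tautological and the paper's argument (and your forward direction) closes it; the identification of this pointwise condition with the aggregate statement ``the dimension of SSCS does not increase'' is made only in the paper's final sentences, without justification, and the example above shows that identification is unsound. So your diagnosis is sharper than the paper's own treatment: the difficulty you isolate is real, but the conclusion to draw is not that a cleverer graph-theoretic invariance lemma is needed (none can exist, since the statement it would certify is false); it is that the backward implication is only valid if the theorem's hypothesis is interpreted pointwise over the minimizing realizations, rather than as a comparison of the two scalar SSCS dimensions.
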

\begin{proof}
We begin by establishing the necessary condition for a node to be included in the FSSCS.
For a structured network represented by the pair $(\mathcal{A}_\mathcal{P},\mathcal{B})$, let $v_i\in\mathbb{R}^{n}$ be a standard basis vector. 
Then $v_i\in\mathcal{C}_\Gamma^\mathcal{F}$ if and only if $C_{\Gamma v_i}=C_{\Gamma}$ for all network parameters.
For \textit{only if} condition, it is clear that $C_{\Gamma} \subseteq C_{\Gamma v_i}$. 
Assume $v_i$ is included in $\mathcal{C}_\Gamma^\mathcal{F}$ and thus in $C_{\Gamma}$. 
Therefore, $\mathcal{A}_\mathcal{P}^j v_i$ belongs to $C_{\Gamma}$ for each $j \in \{1, \ldots, n-1\}$. 
This implies that $v_i$, along with all its subsequent states generated by the powers of $\mathcal{A}_\mathcal{P}$, is contained in $C_{\Gamma}$ as described in \eqref{eq}.
This means all vectors in $C_{\Gamma v_i}$ are also contained within $C_{\Gamma}$, 
thereby establishing that $C_{\Gamma v_i}$ is equal to $C_{\Gamma}$ if $v_i\in\mathcal{C}_\Gamma^\mathcal{F}$.
For \textit{if} condition, let us assume $C_{\Gamma} = C_{\Gamma v_i}$. By the definition of controllability, $v_i$ exists within $C_{\Gamma v_i}$ for all network parameters. 
Since $C_{\Gamma v_i}$ is equivalent to $C_{\Gamma}$, $v_i$ must be included in $C_{\Gamma}$, meaning $v_i$ is in $\mathcal{C}_\Gamma^\mathcal{F}$.
Therefore, the necessary and sufficient condition for a standard basis vector $v_i\in\mathbb{R}^{n}$ to be included in the FSSCS 
is that the dimension of SSCS for the augmented pair $(\mathcal{A}_\mathcal{P}, \mathcal{B}_{v_i})$ 
remains consistent with that of the original pair $(\mathcal{A}_\mathcal{P}, \mathcal{B})$, where ${\mathcal{B}}_{v_i}=[\mathcal{B},v_i]$. 
From a graph-theoretical perspective, this implies that a state node $i \in \mathcal{V}$ is an FSSC node if and only if 
the dimension of SSCS does not increase when $i$ becomes a leader with additional input.
\end{proof}

Note that an FSSC node is controllable in all realizations of SSCS. 
Therefore, it is also a sufficient condition for being an FSC node, which is controllable in all realizations of SCS. 
Consequently, an FSSC node represents the strongest concept of controllability for individual nodes, ensuring that the state node is controllable for all realizations of network parameters.
From \textit{Theorem~\ref{thm_FSSC}}, determining an FSSC node requires knowing the dimension of SSCS, i.e., $|\mathcal{C}_\Gamma|$ in \eqref{ex_1_dim}.
\textcolor{black}{However, due to the difficulty in determining this dimension of SSCS, 
most studies focus on establishing its lower bounds \cite{monshizadeh2015strong,yaziciouglu2020strong,yazicioglu2012tight,yaziciouglu2016graph}.}

\begin{remark}\textcolor{black}{
From the proof of \textit{Theorem~\ref{thm_FSSC}}, it follows that all standard basis vectors forming the FSSCS have a one-to-one correspondence with the FSSC nodes; that is, 
if a node is identified as an FSSC node, its corresponding standard basis vector is guaranteed to be in the FSSCS, and conversely, any standard basis vector in the FSSCS corresponds to an FSSC node. 
However, this does not imply that the FSSCS is generated only by those standard basis vectors. 
In fact, even a non-standard vector having more than two non-zero elements may serve as the basis for the FSSCS. 
However, such a vector inherently embeds a dependency among at least two nodes, making it impossible to control each node independently for all network parameters. 
Since this paper focuses on identifying which nodes can be independently controlled, i.e., FSSC nodes, considering only the standard basis vectors that constitute the FSSCS is sufficient.
For a deeper understanding, please refer to \textit{Section II} in the Supplementary Materials.}
\end{remark}

\section{Graph-Theoretical Meaning of Controllability Matrix}\label{sec_cont}
This section focuses on the graph-theoretical interpretation of elements in the controllability matrix, 
which serves as the foundational background for exploring the dimension of SSCS in the following sections.
For a graph $\mathcal{G}(\mathcal{V},\mathcal{E})$, let us consider a stem from a leader to $i\in\mathcal{V}$ with $k$-steps, 
denoted as $\mathcal{S}^{i,k}_{p}$, where $p$ is the index of stems.
Let us denote the weight of $w$-th edge in the sequence of $\mathcal{S}^{i,k}_{p}$ as $e^w_p$  where $w \in \{1, ..., k\}$.
Then, the \textit{weight product (WP)} of $\mathcal{S}^{i,k}_{p}$, is defined as:
\begin{equation}\label{eq_gain}
\mathcal{WP}(\mathcal{S}^{i,k}_{p}) = \prod_{w=1}^{k} e^w_p, 
\end{equation}
which reflects the product of the edge weights in $p$-th stem. 
If there are $m$-stems to a state node $i \in \mathcal{V}$ with $k$-steps, 
we denote the set of these $m$-stems as $\mathcal{S}^{i,k}_{\Sigma}$.
The \textit{sum of weight products (SWP)} for all stems to $i \in \mathcal{V}$ is defined as:
\begin{equation}\label{eq_gain2}
\mathcal{SWP}(\mathcal{S}^{i,k}_{\Sigma}) = \sum_{p=1}^{m} \mathcal{WP}(\mathcal{S}^{i,k}_{p}), %
\end{equation}
where $m$ is the total number of such stems to node $i$ with $k$-steps.
From \cite{park2024controllability}, it is known that the $ik$-th element in the controllability matrix given by \eqref{eq_cont} implies
the SWP of all stems from a leader to $i\in\mathcal{V}$ with $k$-steps.
Therefore, for the controllability matrix $\mathcal{C}\in\mathbb{R}^{n \times n}$ with a leader, we have:
\begin{equation}\label{eq_cont_meaning}
[\mathcal{C}]_{i,k+1} = \mathcal{SWP}(\mathcal{S}^{i,k}_{\Sigma}), %
\end{equation}
where $k\in\{0,...,n-1\}$ and $i\in\{1,...,n\}$.
\textcolor{black}{
For a detailed example illustrating the SWP concept within the controllability matrix, please refer to \textit{Example 5} in the Supplementary Materials.}
Note that the interpretation based on SWP can be extended to controllability matrices with multiple leaders \cite{park2024controllability}.
For further analysis, we classify the non-zero elements in the controllability matrix into \textit{single-term} and \textit{multi-term}.

\begin{definition}\label{def_term}\cite{park2024controllability} 
For the controllability matrix in \eqref{eq_cont}, the non-zero elements can be classified as follows:
\begin{itemize}
\item An element is a \textbf{single-term} if it consists of a product of non-zero edge weights or is a single edge weight.
\item An element is a \textbf{multi-term} if it is composed of the sum of two or more \textit{single-terms}.
\end{itemize}
\end{definition}

When analyzing the dimension of SSCS (or the minimum rank of the controllability matrix), it is crucial to consider non-zero elements that have the potential to become zero.
This definition highlights a critical distinction: \textit{Single-term} in the controllability matrix is invariably non-zero, whereas \textit{multi-term} may potentially be zero. 
Given this observation, the presence of \textit{multi-term} causes uncertainty in determining the rank of controllability matrix. 
\textcolor{black}{
Considering that SWP is defined as the sum of WPs in \eqref{eq_cont_meaning}, 
the graph-theoretical interpretations of \textit{single-terms} and \textit{multi-terms} within the controllability matrix can be established as follows:
\begin{proposition}\label{prop_term}
For a graph $\mathcal{G}(\mathcal{V},\mathcal{E})$ with a single leader, 
the element $[\mathcal{C}]_{i,k+1}$ in the controllability matrix $\mathcal{C} \in \mathbb{R}^{n \times n}$ is:
\begin{itemize}
    \item A \textbf{single-term} if and only if there exists a unique stem leading to node $i\in\mathcal{V}$ with $k$-steps.
    \item A \textbf{multi-term} if and only if multiple stems are leading to node $i\in\mathcal{V}$ with $k$-steps.
\end{itemize}
\end{proposition}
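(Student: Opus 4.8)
The plan is to derive the proposition directly from the graph-theoretical reading of the controllability matrix in \eqref{eq_cont_meaning} together with the classification in \textit{Definition~\ref{def_term}}. Observe first that the two bullets describe complementary scenarios (a single stem versus two or more stems reaching $i$ with $k$-steps), while \textit{Definition~\ref{def_term}} partitions every non-zero entry into a single-term or a multi-term. Hence it suffices to prove one forward implication in each bullet, and the remaining biconditionals follow by complementation.

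First I would recall from \eqref{eq_cont_meaning} and \eqref{eq_gain2} that $[\mathcal{C}]_{i,k+1}=\mathcal{SWP}(\mathcal{S}^{i,k}_{\Sigma})=\sum_{p=1}^{m}\mathcal{WP}(\mathcal{S}^{i,k}_{p})$, where $m$ is the number of distinct stems from the leader to node $i$ with $k$-steps and, by \eqref{eq_gain}, each $\mathcal{WP}(\mathcal{S}^{i,k}_{p})=\prod_{w=1}^{k}e^{w}_{p}$ is a product of non-zero edge weights. For the single-term bullet, if the stem is unique then $m=1$, the sum collapses to the single product $\prod_{w=1}^{k}e^{w}_{1}$, and this is a single-term by \textit{Definition~\ref{def_term}}. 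Conversely, a single-term is by definition one product of edge weights, so the SWP decomposition must contain exactly one weight product, forcing $m=1$, i.e. a unique stem.

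For the multi-term bullet, if $m\geq 2$ then $[\mathcal{C}]_{i,k+1}$ is expressed as a sum of $m$ weight products, each a single-term. The hard part will be showing this sum is genuinely a multi-term and does not algebraically collapse into a single product. Here I would use that distinct stems are distinct directed paths and therefore differ in at least one traversed edge; since each edge carries its own network parameter and these parameters are treated as independent free symbols, the corresponding weight products are distinct monomials, so no two coincide and no cancellation occurs. Consequently the expression retains at least two single-terms and is a multi-term. The converse is immediate, as a multi-term is by definition a sum of two or more single-terms, which corresponds to $m\geq 2$ distinct stems. The only substantive obstacle is this non-collapse step, which rests entirely on the symbolic independence of the edge weights.
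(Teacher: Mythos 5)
Your proposal is correct and follows essentially the same route the paper takes: the paper states \textit{Proposition~\ref{prop_term}} without a separate proof, treating it as an immediate consequence of the SWP interpretation \eqref{eq_cont_meaning} combined with \textit{Definition~\ref{def_term}}, which is exactly your argument. Your additional non-collapse step (distinct stems have distinct edge sets, hence distinct monomials in independent edge-weight parameters, so the sum of two or more weight products cannot reduce to a single product) is a detail the paper leaves implicit, and including it makes the biconditionals fully rigorous.
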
}
For convenience, the unique element in the $i$-th row of the controllability matrix will be referred to as the \textit{single-term} (or \textit{multi-term}) of a node $i \in \mathcal{V}$.
\textcolor{black}{To maintain clarity and consistency in the following sections, we assume that:
}

\begin{assumption}\label{assum}
A graph is an HDAG with a single leader.
\end{assumption}
This assumption focuses on HDAGs, where all stems to each state node have the same number of steps.
It implies that each row of the controllability matrix is either a zero vector or contains exactly one non-zero element.
Note that while the results presented from this point forward are restricted to a single leader, 
they can be extended to multiple leaders, as shown in \cite{park2024controllability,park2024fixed}. 

For a graph $\mathcal{G}(\mathcal{V},\mathcal{E})$, we define nodes within $\mathcal{V}$ that have multiple incoming edges as \textit{integrators}, 
with the set of integrators denoted by $\mathcal{V}^{int}$.
Additionally, we introduce the concept of \textit{intermediators}, defined as nodes within $\mathcal{V}$ that have exactly one incoming edge,
where all stems leading to these nodes must include at least one integrator.
The set of intermediators is denoted by $\mathcal{V}^{med}$.
Based on these definitions, the following lemma provides a condition for the occurrence of \textit{multi-terms} in the controllability matrix.

\begin{lemma}\label{lem_int_med}
For a graph $\mathcal{G}(\mathcal{V},\mathcal{E})$,
a node $i\in\mathcal{V}$ is included in $\mathcal{V}^{int}\cup\mathcal{V}^{med}$
if and only if the $i$-th row of the controllability matrix has a \textit{multi-term}.
\end{lemma}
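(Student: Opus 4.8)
The plan is to translate the statement about multi-terms into a purely graph-theoretic count of stems and then characterize when that count exceeds one. By \textit{Proposition~\ref{prop_term}}, the $i$-th row carries a multi-term exactly when more than one stem reaches $i$; under \textit{Assumption~\ref{assum}} (a single-leader HDAG) every stem to $i$ has the same length $k$, so all of these stems contribute to the single entry $[\mathcal{C}]_{i,k+1}$. Hence the lemma reduces to showing that the number of distinct stems from the leader to $i$ is at least two if and only if $i \in \mathcal{V}^{int}\cup\mathcal{V}^{med}$. I will write $N(i)$ for this number of stems, noting that input-connectedness guarantees $N(i)\ge 1$ for every node, with $N(i)=1$ for the leader along its trivial stem.

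For the \emph{if} direction I split on the two cases. If $i\in\mathcal{V}^{int}$, then $i$ has at least two incoming edges from predecessors $j_1,j_2$; since $N(j_1),N(j_2)\ge 1$, prepending these stems to the two edges yields $N(i)\ge 2$. If $i\in\mathcal{V}^{med}$, then $i$ has a single incoming edge and every stem to $i$ contains an integrator $g$; because $i$ itself is not an integrator, $g$ is a proper ancestor of $i$. As $g$ is an integrator, $N(g)\ge 2$, and appending the fixed suffix from $g$ to $i$ (taken from a stem through $g$) to two distinct stems reaching $g$ produces two distinct stems to $i$, so $N(i)\ge 2$. In a DAG these concatenations introduce no repeated node because, in any topological order, the prefixes ending at $g$ and the suffix from $g$ to $i$ meet only at $g$.

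For the \emph{only if} direction, assume $N(i)\ge 2$. If $i$ is an integrator we are done. Otherwise $i$ is reachable and distinct from the leader, so it has exactly one incoming edge. I claim every stem to $i$ then contains an integrator, which places $i$ in $\mathcal{V}^{med}$. I expect this claim to be the main obstacle, and I would prove it by contradiction: suppose some stem $P$ to $i$ avoids all integrators. Then every node of $P$ other than the leader has exactly one incoming edge, so tracing backward from $i$ along unique predecessors is forced at every step and reconstructs precisely $P$. Since any stem to $i$ must enter through $i$'s unique predecessor, whose own predecessor is likewise forced, and so on, every stem to $i$ coincides with $P$; this gives $N(i)=1$, contradicting $N(i)\ge 2$. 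Hence no integrator-free stem exists, and $i\in\mathcal{V}^{med}$.

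The principal difficulty is this last backward-forcing argument: one must show that the absence of any integrator along a single stem propagates the ``unique incoming edge'' property back through the entire predecessor chain, thereby collapsing all stems to $i$ into one. Care is also needed at the boundary with the leader, whose trivial stem contains no integrator, and in invoking input-connectedness to ensure that each predecessor of an integrator is itself reachable so that the stem counts behave as claimed.
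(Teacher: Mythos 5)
Your proof is correct, and at the top level it follows the same route as the paper's: both reduce the lemma to counting stems to $i$ via \textit{Proposition~\ref{prop_term}} and \textit{Assumption~\ref{assum}}, and both then handle the integrator and intermediator cases separately. The substantive difference is in the hard direction (multi-term implies membership). The paper cases on whether the multiple stems to $i$ are edge-disjoint, and when they are not, it simply \emph{asserts} that all stems leading to $i$ include at least one integrator; your proof instead cases on the in-degree of $i$ and, when $i$ has a unique incoming edge, actually proves that assertion by the backward-forcing argument: an integrator-free stem forces every node along it (other than the leader) to have a unique incoming edge, so tracing predecessors backward collapses every stem to $i$ onto that one stem, contradicting multiplicity. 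This makes your write-up strictly more complete than the paper's at exactly the point you flagged as the main obstacle. Your auxiliary checks are also sound and are left implicit in the paper: concatenations through an integrator $g$ yield genuine paths because in a DAG the prefix ending at $g$ and the suffix starting at $g$ can meet only at $g$, and the leader admits only the trivial stem (any longer stem to it would close a cycle), which is what terminates the backward trace.
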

\begin{proof}
For \textit{only if} condition, let us suppose $i \in (\mathcal{V}^{int} \cup \mathcal{V}^{med})$. 
This implies that $i$ either has multiple incoming edges or is reached by stems that include at least one integrator. 
In both cases, multiple stems leading to $i$ are necessitated. 
Consequently, by \textit{Proposition~\ref{prop_term}}, the $i$-th row of the controllability matrix must contain a \textit{multi-term}.
For \textit{if} condition, let us suppose that the $i$-th row in the controllability matrix contains a \textit{multi-term}. 
From \textit{Proposition~\ref{prop_term}} and \textit{Assumption~\ref{assum}}, there are multiple stems leading to $i$ with the same step. 
If these multiple stems are edge disjointed, $i$ must have multiple incoming edges, thus $i \in \mathcal{V}^{int}$. 
If these stems are not edge disjointed, all multiple stems leading to $i$ include at least one integrator, 
making either $i \in \mathcal{V}^{int}$ or $i \in \mathcal{V}^{med}$.
\end{proof}

Note that although \textit{multi-terms} have the potential to become zero, not every \textit{multi-term} can always be zero due to dependencies on network parameters.
\textcolor{black}{
For an example illustrating this, check \textit{Example 6} in the Supplementary Materials.}
For a given controllability matrix, the following theorem provides the condition that at least one \textit{multi-term} cannot be zero.

\begin{theorem}\label{thm_integrator}
For a graph $\mathcal{G}(\mathcal{V},\mathcal{E})$,
at least one \textit{multi-term} in the controllability matrix cannot be zero if and only if
there exists $i_w\in \mathcal{V}^{int}$ for which there is exactly one stem leading to $i_w$ that does not include any other integrator.
\end{theorem}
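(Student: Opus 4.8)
The plan is to exploit the recursive structure that the SWP interpretation in \eqref{eq_cont_meaning} imposes on the controllability matrix. Under \textit{Assumption~\ref{assum}} each node $i$ is reached by stems of a single length, so row $i$ of $\mathcal{C}$ has at most one nonzero entry; I write $\sigma_i$ for this entry and $w_{ji}$ for the nonzero weight of edge $(j,i)$. Since every $k$-step stem to $i$ is a $(k-1)$-step stem to a predecessor $j$ extended by $(j,i)$, the definitions of $\mathcal{WP}$ and $\mathcal{SWP}$ give the layer recursion $\sigma_i=\sum_{(j,i)\in\mathcal{E}} w_{ji}\,\sigma_j$ (with the leader's own entry equal to $1$). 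By \textit{Lemma~\ref{lem_int_med}} the multi-terms are exactly the entries at $\mathcal{V}^{int}\cup\mathcal{V}^{med}$, and for an intermediator $m\in\mathcal{V}^{med}$, unwinding the recursion backward along its unique in-edges yields $\sigma_m=c_m\,\sigma_{i'}$ for a nonzero monomial $c_m$ and some integrator $i'$ upstream of $m$. Hence all multi-terms vanish precisely when all integrators satisfy $\sigma_i=0$, and I reduce the statement to: the integrators cannot be simultaneously annihilated if and only if some integrator has exactly one integrator-free stem. I also record that an integrator-free stem to $i$ (one not passing through any other integrator) corresponds bijectively to a predecessor $j$ reached by a necessarily unique integrator-free path, whose $\sigma_j$ is a single monomial of edge weights private to that path.

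For the direction ``condition implies not annihilable'', suppose some integrator $i_w$ has exactly one integrator-free stem $S^{*}$, with last edge $(j^{*},i_w)$; then $\sigma_{j^{*}}$ is a product of nonzero weights and $w_{j^{*}i_w}\sigma_{j^{*}}=\mathcal{WP}(S^{*})\neq 0$. Assume, for contradiction, a realization in which all integrators are zero. By the intermediator identity above, every intermediator predecessor of $i_w$ then also has $\sigma=0$, and every integrator predecessor is $0$ by assumption, so in $\sigma_{i_w}=\sum_{(j,i_w)\in\mathcal{E}} w_{ji_w}\sigma_j$ only the single integrator-free term survives, giving $\sigma_{i_w}=w_{j^{*}i_w}\sigma_{j^{*}}\neq 0$. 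This contradicts $\sigma_{i_w}=0$, so in every realization at least one multi-term is nonzero.

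For the converse I assume every integrator has either zero or at least two integrator-free stems and construct a single realization annihilating all integrators, processing them in topological (layer) order. When integrator $i$ is reached, every upstream integrator has already been set to $0$, so by the intermediator identity all non-integrator-free contributions to $\sigma_i$ already vanish and $\sigma_i=\sum_{j\ \text{integrator-free}} w_{ji}\,\sigma_j$. If $i$ has no integrator-free stem this sum is empty and $\sigma_i=0$ automatically; if it has at least two, I tune the incoming weights $w_{ji}$ of these stems so that the surviving monomials cancel (fix all but one and solve for the last, which is possible since each $\sigma_j\neq 0$). The structural fact making the construction consistent is that every edge $w_{ji}$ I tune has head $i$, hence appears only in $\sigma_i$ and in entries strictly downstream of $i$; the latter are processed later and see $\sigma_i=0$, so no previously annihilated integrator is disturbed. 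Combining the two directions gives the claimed equivalence. I expect the main obstacle to be exactly this consistency argument for the converse: one must verify that the top-down tuning never re-activates an already-zeroed multi-term and that the ``at least two integrator-free stems'' hypothesis always supplies enough free, path-private weights to realize the cancellation; the forward direction, by contrast, is a short contradiction once the intermediator identity is established.
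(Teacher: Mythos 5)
Your proof is correct and follows essentially the same route as the paper's: the forward direction isolates the unique integrator-free stem and shows its weight product survives once all integrator-mediated contributions are annihilated, while the converse constructs a realization zeroing all multi-terms under the zero-or-at-least-two integrator-free-stems hypothesis, using the same reduction of intermediators to their upstream integrators that the paper draws from \textit{Lemma~\ref{lem_int_med}}. Your layer recursion $\sigma_i=\sum_{(j,i)\in\mathcal{E}}w_{ji}\sigma_j$ and the explicit topological-order tuning simply make rigorous the simultaneous cancellation that the paper's Cases 1 and 2 assert more informally.
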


\begin{proof}
For \textit{if} condition, suppose that there exists exactly one stem leading to an integrator $i_w \in \mathcal{V}^{int}$, which does not include any other integrator. 
Let the WP of this unique stem be denoted as $\alpha_{i_w}$. 
Given that integrators have at least two incoming edges, there must be at least one other stem leading to $i_w$ that includes another integrator $i_l \in \mathcal{V}^{int}$, for $w \neq l$. 
Let these stems be denoted as $\mathcal{S}^{i_w}_p$ for $p \in \{1,...,m\}$, and the SWP of these $m$-stems be denoted as $\beta_{i_w}$.
Thus, the number of stems leading to $i_w$ is $m+1$.
Furthermore, let the SWP of all stems leading to $i_l$ be denoted as $\beta_{i_l}$.
Then, the \textit{multi-term} of $i_w$ is represented as $\alpha_{i_w} + \beta_{i_w}$,
where $\beta_{i_w}$ is obtained by multiplying $\beta_{i_l}$ with the WP of the path from $i_l$ to $i_w$. 
In this case, if we set the \textit{multi-term} of $i_l$ to zero, i,e, $\beta_{i_l}=0$, the \textit{multi-term} of $i_w$ becomes \textit{single-term}, which can not invariably be zero.
Consequently, this indicates that the \textit{multi-term} of $i_w$ cannot be zero.
For the \textit{only if} condition, suppose that at least one \textit{multi-term} in the controllability matrix cannot be zero. 
For the sake of contradiction, assume that there is no integrator $i_w \in \mathcal{V}^{int}$ which has exactly one stem leading to it that does not include any other integrator. 
Then, each integrator $i_w \in \mathcal{V}^{int}$ either has no or at least two stems that do not include another integrator.
To establish a contradiction, we will show that under the following two cases, all \textit{multi-terms} could be zero, which contradicts our initial assumption.

\begin{itemize}
\item \textbf{Case 1:} An integrator $i_w \in \mathcal{V}^{int}$ has multiple stems that exclude any other integrator.
\item \textbf{Case 2:} An integrator $i_w \in \mathcal{V}^{int}$ does not have any stem that excludes any other integrator. 
\end{itemize}

In \textbf{Case 1}, let us denote the SWP of multiple stems as $\alpha_{i_w}$, and the SWP of stems that include at least one other integrator $i_l \in \mathcal{V}^{int}$, as $\beta_{i_w}$. 
Moreover, let $\beta_{i_l}$ represent the \textit{multi-term} of $i_l$. 
From \textit{Assumption~\ref{assum}}, multiple stems leading to $i_w$ must have the same steps, 
which means the \textit{multi-term} of $i_w$ is represented as $\alpha_{i_w} + \beta_{i_w}$, 
where $\beta_{i_w}$ is obtained by multiplying $\beta_{i_l}$ with the WP of the path from $i_l$ to $i_w$.
Given that $\alpha_{i_w}$ is the sum of two or more \textit{single-terms}, 
both $\alpha_{i_w}$ and all instances of $\beta_{i_l}$ included in $\beta_{i_w}$ have the potential to be zero. 
Therefore, the \textit{multi-term} of $i_w$ can always be zero.
In \textbf{Case 2}, it follows that every stem leading to each integrator includes at least one integrator. 
From {Case 1}, the integrators included in each stem can be zero.
Since the \textit{multi-term} of $i_w$ is the sum of these, it also can be zero.
Furthermore, according to \textit{Lemma~\ref{lem_int_med}}, intermediators $j_w \in \mathcal{V}^{med}$ can also have \textit{multi-terms}, 
but each intermediator has exactly one stem that includes at least one integrator $i_w \in \mathcal{V}^{int}$. 
Therefore, the \textit{multi-term} of $j_w$ is determined by the product of the \textit{multi-term} of $i_w$ and the WP of the path from $i_w$ to $j_w$.
Given the outcomes of Cases 1 and 2, where the \textit{multi-terms} of such integrators are shown to potentially be zero, it follows that the \textit{multi-term} of $j_w$ can similarly be zero.
Therefore, if each integrator $i_w \in \mathcal{V}^{int}$ either does not have a stem that excludes other integrators or has more than one, 
all \textit{multi-terms} can be zero, thereby contradicting our initial assumption. 
\end{proof}

The above theorem provides the unique condition under which at least one \textit{multi-term} in the controllability matrix cannot become zero.
Subsequent discussions on the dimension of SSCS will be based on the condition provided in \textit{Theorem~\ref{thm_integrator}}.

\begin{figure}[t]
\centering
\subfigure[$\mathcal{G}(\mathcal{V},\mathcal{E})$]{
\begin{tikzpicture}[scale=0.7]
\draw[dashed, gray] (-1.7,3) -- (2.7,3) node[left, pos=1.29] { \scriptsize$0$-step};
\draw[dashed, gray] (-1.7,2) -- (2.7,2) node[left, pos=1.29] { \scriptsize$1$-step};
\draw[dashed, gray] (-1.7,1) -- (2.7,1) node[left, pos=1.29] { \scriptsize$2$-step};
\draw[dashed, gray] (-1.7,0) -- (2.7,0) node[left, pos=1.29] { \scriptsize$3$-step};

\node[] at (1.75,3.75) {$u$};
\node[] (node0) at (1.75,3.75) [] {};

\node[] at (1,1.6) {\scriptsize$i_l$};
\node[] at (0,0.6) {\scriptsize$i_w$};

\node[place, circle,minimum size=0.5cm] (node1) at (1,3) [] {\scriptsize$1$};
\node[place, circle,minimum size=0.5cm] (node2) at (0,2) [] {\scriptsize$2$};
\node[place, circle,minimum size=0.5cm] (node3) at (2,2) [] {\scriptsize$3$};
\node[place, circle,minimum size=0.5cm] (node4) at (-1,1) [] {\scriptsize$4$};
\node[place, circle,minimum size=0.5cm] (node5) at (1,1) [] {\scriptsize$5$};
\node[place, circle,minimum size=0.5cm] (node6) at (0,0) [] {\scriptsize$6$};

\draw (node0) [-latex, line width=0.5pt] -- node [right]  {} (node1);
\draw (node1) [-latex, line width=0.5pt] -- node [right]  {} (node2);
\draw (node1) [-latex, line width=0.5pt] -- node [right]  {} (node3);
\draw (node2) [-latex, line width=0.5pt] -- node [right]  {} (node4);
\draw (node2) [-latex, line width=0.5pt] -- node [right]  {} (node5);
\draw (node3) [-latex, line width=0.5pt] -- node [right]  {} (node5);
\draw (node4) [-latex, line width=0.5pt] -- node [right]  {} (node6);
\draw (node5) [-latex, line width=0.5pt] -- node [right]  {} (node6);
\end{tikzpicture}
}
\subfigure[$\mathcal{G}(\mathcal{V},\mathcal{E})$]{
\begin{tikzpicture}[scale=0.7]
\draw[dashed, gray] (-1.7,3) -- (2.7,3) node[left, pos=1.29] { \scriptsize$0$-step};
\draw[dashed, gray] (-1.7,2) -- (2.7,2) node[left, pos=1.29] { \scriptsize$1$-step};
\draw[dashed, gray] (-1.7,1) -- (2.7,1) node[left, pos=1.29] { \scriptsize$2$-step};
\draw[dashed, gray] (-1.7,0) -- (2.7,0) node[left, pos=1.29] { \scriptsize$3$-step};

\node[] at (1.75,3.75) {$u$};
\node[] (node0) at (1.75,3.75) [] {};

\node[] at (1,1.6) {\scriptsize$i_l$};
\node[] at (0,0.6) {\scriptsize$i_w$};
\node[] at (2,0.6) {\scriptsize$j_l$};

\node[place, circle,minimum size=0.5cm] (node1) at (1,3) [] {\scriptsize$1$};
\node[place, circle,minimum size=0.5cm] (node2) at (0,2) [] {\scriptsize$2$};
\node[place, circle,minimum size=0.5cm] (node3) at (2,2) [] {\scriptsize$3$};
\node[place, circle,minimum size=0.5cm] (node4) at (-1,1) [] {\scriptsize$4$};
\node[place, circle,minimum size=0.5cm] (node5) at (1,1) [] {\scriptsize$5$};
\node[place, circle,minimum size=0.5cm] (node6) at (0,0) [] {\scriptsize$6$};
\node[place, circle,minimum size=0.5cm] (node7) at (2,0) [] {\scriptsize$7$};

\draw (node0) [-latex, line width=0.5pt] -- node [right]  {} (node1);
\draw (node1) [-latex, line width=0.5pt] -- node [right]  {} (node2);
\draw (node1) [-latex, line width=0.5pt] -- node [right]  {} (node3);
\draw (node2) [-latex, line width=0.5pt] -- node [right]  {} (node4);
\draw (node2) [-latex, line width=0.5pt] -- node [right]  {} (node5);
\draw (node3) [-latex, line width=0.5pt] -- node [right]  {} (node5);
\draw (node4) [-latex, line width=0.5pt] -- node [right]  {} (node6);
\draw (node5) [-latex, line width=0.5pt] -- node [right]  {} (node6);
\draw (node5) [-latex, line width=0.5pt] -- node [right]  {} (node7);
\end{tikzpicture}
}
\caption{Graphs satisfying \textit{Condition~\ref{unique_stem_condition}}. In the controllability matrix,
(a) From \textit{Theorem~\ref{thm_integrator}}, either the \textit{multi-term} of $i_w$ or $i_w$ can be zero.
(b) From \textit{Lemma~\ref{lem_mul}}, either the \textit{multi-term} of $i_w$ or the \textit{multi-terms} of $i_l$ and $j_l$ can be zero.
}
\label{fig_cond}
\end{figure}
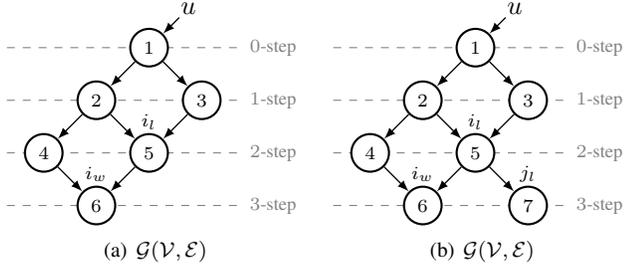

\begin{condition}\label{unique_stem_condition}
For an integrator $i_w \in \mathcal{V}^{int}$ in $\mathcal{G}(\mathcal{V}, \mathcal{E})$, there exists exactly one stem leading to $i_w$ that does not include any other integrator.
By the definition of integrators, this condition ensures the existence of at least one additional stem leading to $i_w$ that includes another integrator $i_l \in \mathcal{V}^{int}$, where $i_w \neq i_l$.
\end{condition}

The simplest graph satisfying this condition is illustrated in Fig.~\ref{fig_cond}(a).
In this graph, the proof of \textit{if} condition in \textit{Theorem~\ref{thm_integrator}} implies that
either the \textit{multi-term} of $i_w$ or $i_l$ cannot be zero.
Note that this is the only case where at least one \textit{multi-term} in the controllability matrix cannot be zero.

\section{The Dimension of Strongly Structurally Controllable Subspace}\label{sec_dim}
In the previous section, we delved into the characteristics of elements constituting the controllability matrix.
Building on our prior explorations, this section seeks to determine the dimension of SSCS, i.e., $|\mathcal{C}_\Gamma|$ in \eqref{ex_1_dim}, 
by examining the influence of \textit{multi-terms} on the maximum rank of controllability matrix.
We first introduce two lemmas based on \textit{Condition~\ref{unique_stem_condition}}.


\begin{lemma}\label{lem_del}
Let us consider an integrator $i_w\in\mathcal{V}^{int}$ satisfying \textit{Condition~\ref{unique_stem_condition}} in $\mathcal{G}(\mathcal{V},\mathcal{E})$.
Then, zeroing the \textit{multi-term} of $i_l \in \mathcal{V}^{int}$, included in a stem leading to $i_w$, does not reduce the maximum rank of controllability matrix.
\end{lemma}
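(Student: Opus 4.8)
The plan is to translate the statement about rank into a statement about the columns of the controllability matrix and then to track precisely which entries are annihilated when the \textit{multi-term} of $i_l$ is forced to zero. Under \textit{Assumption~\ref{assum}} every row of $\mathcal{C}\in\mathbb{R}^{n\times n}$ has at most one non-zero entry, located in the column indexed by the unique step at which the corresponding node is reached; hence distinct columns have disjoint row-supports, and $\text{rank}(\mathcal{C})$ equals the number of columns that are not identically zero. Showing that zeroing the \textit{multi-term} of $i_l$ does not reduce the maximum rank therefore amounts to exhibiting, for every column that can be made non-zero in the unconstrained network, a node whose entry survives the constraint $\mathcal{SWP}(\mathcal{S}^{i_l,\cdot}_{\Sigma})=0$.

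The core of the argument exploits the structure forced by \textit{Condition~\ref{unique_stem_condition}}. Since $i_w\in\mathcal{V}^{int}$ admits exactly one stem containing no other integrator, every internal node of that stem has a single incoming edge and, tracing back to the leader, a unique stem of its own; by \textit{Proposition~\ref{prop_term}} each such node is a \textit{single-term}, so its entry is non-zero for all network parameters. This integrator-free stem thus plants a permanently non-zero entry in the column of every step from the leader up to $i_w$. In particular, because $i_l$ is a proper ancestor of $i_w$, the stem supplies a \textit{single-term} node $s\neq i_l$ at the same step as $i_l$ (note $s\neq i_l$ since $i_l\in\mathcal{V}^{int}$ carries a \textit{multi-term}). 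Consequently the column carrying $i_l$ remains non-zero even after $i_l$'s own entry is annihilated, and no rank is lost at $i_l$'s step.

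Next I would reuse the decomposition from the proof of \textit{Theorem~\ref{thm_integrator}}: writing $i_w$'s \textit{multi-term} as $\alpha_{i_w}+\beta_{i_w}$, where $\alpha_{i_w}$ is the $\mathcal{WP}$ of the integrator-free stem and $\beta_{i_w}$ collects the contributions routed through $i_l$ (a multiple of $i_l$'s \textit{multi-term}), the constraint kills $\beta_{i_w}$ but leaves $\alpha_{i_w}\neq 0$. Hence $i_w$'s entry, and with it $i_w$'s column, stays non-zero, so no rank is lost at $i_w$'s step either. Combined with the previous paragraph, this preserves the rank contributed by all steps up to and including the step of $i_w$.

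The delicate point, which I expect to be the main obstacle, is to account for the remaining columns, namely those steps whose nodes are all \textit{multi-terms} (integrators and intermediators, by \textit{Lemma~\ref{lem_int_med}}). Forcing $i_l$'s \textit{multi-term} to zero annihilates exactly those entries whose $\mathcal{SWP}$ factors entirely through $i_l$, so the claim reduces to showing that every such contributing column still retains a node fed by a route that avoids $i_l$. I would establish this from the redundancy created by $i_w$'s integrator-free stem, which reaches $i_w$ without visiting $i_l$ and therefore keeps an alternative, non-$i_l$ route available to the downstream nodes relevant to the maximum rank. Making this existence statement precise for arbitrary HDAGs, rather than only for the illustrative configuration of Fig.~\ref{fig_cond}(a), and pinning down exactly which columns the maximum rank is counted over, is where the substantive work lies.
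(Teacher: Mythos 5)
Your second paragraph is, almost word for word, the paper's entire proof: the paper picks the node you call $s$ (it names it $i_q$) on the unique integrator-free stem to $i_w$, at the same step as $i_l$, observes that its entry is a \textit{single-term} because that stem contains no integrator, and concludes at once that the column holding $i_l$'s \textit{multi-term} cannot become a zero vector, ``implying that the maximum rank remains unchanged.'' Your first paragraph (disjoint column supports under \textit{Assumption~\ref{assum}}, so rank counts non-zero columns) and your third paragraph (the $\alpha_{i_w}+\beta_{i_w}$ decomposition showing $i_w$'s own column also survives) are consistent with, and slightly more explicit than, what the paper writes. So, measured against the paper's own proof, you have already done everything it does, by the same route.

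The ``delicate point'' in your last paragraph, however, is not addressed by the paper at all, and your suspicion is well-founded: under the reading in which zeroing the \textit{multi-term} of $i_l$ is a constraint on the network parameters that propagates downstream (the reading used in the proofs of \textit{Theorem~\ref{thm_integrator}} and \textit{Lemma~\ref{lem_mul}}), the literal claim about the global maximum rank is \emph{false}, so the existence statement you were trying to supply cannot be proved. Concretely, add one edge $(7,8)$ to the graph of Fig.~\ref{fig_cond}(b): the resulting graph is still an HDAG, \textit{Condition~\ref{unique_stem_condition}} still holds with $i_w=6$, $i_l=5$, but node $8$ is now the only node at step $4$ and its entry is $a_{57}a_{78}\,\alpha$ with $\alpha=a_{12}a_{25}+a_{13}a_{35}$ the \textit{multi-term} of $i_l$; setting $\alpha=0$ annihilates the fifth column and drops the rank from $5$ to $4$. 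What remains true---and is all the paper proves, and all that \textit{Theorem~\ref{thm_multi}} actually invokes---is the column-local statement: zeroing $\beta_{i_l}$ cannot zero the column containing $i_l$'s entry, nor any column meeting the integrator-free stem up to $i_w$. The fate of the later columns is deliberately ceded to \textit{Lemma~\ref{lem_mul}} and \textit{Remark~\ref{rem_mul}}. So the fix for your write-up is not to complete the missing existence argument but to restrict the claim, as the paper implicitly does, to the columns touched by the integrator-free stem; with that restriction, your first three paragraphs already constitute a complete proof.
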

\begin{proof}
Let us consider the unique stem leading to $i_w$ that does not include any other integrator.
From \textit{Assumption~\ref{assum}}, within this stem, there exists a node $i_q \in \mathcal{V}$, 
and the stem leading to $i_q$ has the same steps as the stems leading to $i_l$.
Since the stem leading to $i_q$ excludes any other integrator, within the controllability matrix, the column corresponding to the steps of the stem leading to $i_l$
contains the \textit{single-term} of $i_q$, in addition to the \textit{multi-term} of $i_l$.
Thus, zeroing the \textit{multi-term} of $i_l$ cannot turn this column into a zero vector due to the remaining \textit{single-term} of $i_q$,
implying that the maximum rank of the controllability matrix remains unchanged.
\end{proof}

From \textit{Lemma~\ref{lem_del}}, it follows that even if a particular column consists only of \textit{multi-terms}, it may not necessarily become a zero vector.
\textcolor{black}{
For example, let us consider the following controllability matrix of the graph depicted in Fig.~\ref{fig_cond}(b).
\begin{equation}\label{eq_ex_cont_mul}
\resizebox{0.8\hsize}{!}{$
\mathcal{C} = \begin{bmatrix}
1 & 0 & 0 & 0 & 0 & 0 & 0  \\
0 & a_{12} & 0 & 0 & 0 & 0 & 0  \\
0 & a_{13} & 0 & 0 & 0 & 0 & 0  \\
0 & 0 & a_{12}a_{24} & 0 & 0 & 0 & 0  \\
0 & 0 & \alpha & 0 & 0 & 0  & 0 \\
0 & 0 & 0 & a_{56}\alpha+a_{12}a_{24}a_{46} & 0 & 0 & 0 \\  
0 & 0 & 0 & a_{57}\alpha & 0 & 0 & 0
\end{bmatrix},
$}
\end{equation}
where $\alpha=a_{12}a_{25}+a_{13}a_{35}$.
Both $[\mathcal{C}]_{6,4}$ and $[\mathcal{C}]_{7,4}$ are \textit{multi-terms} in the same column.
However, due to the dependency on network parameters, only one of these can become zero.
For instance, setting $\alpha$ to zero makes $[\mathcal{C}]_{7,4}$ zero, but $[\mathcal{C}]_{6,4}$ becomes a \textit{single-term}, which is invariably non-zero.
Note that in this example, node 5 corresponds to $i_w$ in \textit{Condition~\ref{unique_stem_condition}}, and its \textit{multi-term} is $\alpha$.}
Hence, we focus on columns that contain only \textit{multi-terms}, including the \textit{multi-term} of $i_w$.
The following lemma provides the necessary and sufficient conditions for such a column to become a zero vector.

\begin{lemma}\label{lem_mul}
For a graph $\mathcal{G}(\mathcal{V},\mathcal{E})$ with an integrator $i_w\in\mathcal{V}^{int}$ satisfying \textit{Condition~\ref{unique_stem_condition}},
consider a column $c_k$ in the controllability matrix that consists only of \textit{multi-terms} including the \textit{multi-term} of $i_w$.
All the \textit{multi-terms} in column $c_k$ can become zero if and only if
there is no intermediator among the nodes corresponding to these \textit{multi-terms} that passes through the integrator $i_l$ from the leader.
\end{lemma}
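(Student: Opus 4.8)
The plan is to write every entry of the target column $c_k$ (the column indexed by $k$-steps) in the multi-term form already produced in the proof of \textit{Theorem~\ref{thm_integrator}}, and then to split the equivalence according to whether the factor $\beta_{i_l}$, the multi-term of $i_l$, is shared by a second entry of that column. Recall from \textit{Condition~\ref{unique_stem_condition}} and the proof of \textit{Theorem~\ref{thm_integrator}} that the multi-term of $i_w$ equals $\alpha_{i_w}+\mathcal{WP}(i_l\!\rightarrow\! i_w)\,\beta_{i_l}$, where $\alpha_{i_w}$ is the single-term carried by the unique stem to $i_w$ that avoids every other integrator, hence invariably non-zero, and $\mathcal{WP}(i_l\!\rightarrow\! i_w)$ is a product of non-zero edge weights. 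By \textit{Lemma~\ref{lem_int_med}} every non-zero entry of $c_k$ belongs to an integrator or an intermediator; for an intermediator $j$ at the same step, tracing its unique incoming chain backward reaches a first integrator $i$, and the entry factors as $\mathcal{WP}(i\!\rightarrow\! j)\,\beta_{i}$. Here ``$j$ passes through $i_l$'' means precisely that this governing integrator is $i_l$, so that the entry of $j$ is the single-term multiple $\mathcal{WP}(i_l\!\rightarrow\! j)\,\beta_{i_l}$.

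For necessity (the $\Rightarrow$ direction, argued by contraposition) I assume some intermediator $j$ in $c_k$ passes through $i_l$. Zeroing the entry of $i_w$ forces $\beta_{i_l}=-\alpha_{i_w}/\mathcal{WP}(i_l\!\rightarrow\! i_w)\neq 0$, which makes the entry of $j$ non-zero; conversely, zeroing the entry of $j$ forces $\beta_{i_l}=0$, which collapses the entry of $i_w$ to the single-term $\alpha_{i_w}\neq 0$. Thus $i_w$ and $j$ can never vanish simultaneously, so not all multi-terms of $c_k$ can be driven to zero. This part is clean because it rests only on the common factor $\beta_{i_l}$ appearing in opposite roles and requires no control over the remaining parameters.

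For sufficiency ($\Leftarrow$) I assume no intermediator of $c_k$ passes through $i_l$, and I pin $\beta_{i_l}$ to the non-zero value $-\alpha_{i_w}/\mathcal{WP}(i_l\!\rightarrow\! i_w)$ that zeroes $i_w$. Every other non-zero entry of $c_k$ is then either an intermediator governed by an integrator $i'\neq i_l$, whose multi-term $\mathcal{WP}(i'\!\rightarrow\!\cdot)\,\beta_{i'}$ is not a multiple of $\beta_{i_l}$, or an integrator $i'\neq i_l$; in the latter case, because $i'$ has at least two incoming edges and is not an intermediator through $i_l$, its multi-term retains genuine internal freedom even after $\beta_{i_l}$ is fixed. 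In both cases I zero the entry by sending $\beta_{i'}$ to $0$, using one of $i'$'s own incoming-edge weights. Carrying out these assignments in topological order, fixing $\beta_{i_l}$ first and then setting each remaining integrator through an edge at its own layer, drives all entries of $c_k$ to zero.

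The main obstacle is exactly this last step: ruling out a hidden conflict among the simultaneous assignments. The hypothesis is precisely what forbids it, since the only column entry that is forced to be a non-vanishing scalar multiple of $\beta_{i_l}$ is an intermediator whose governing integrator is $i_l$; excluding such nodes guarantees that, once $\beta_{i_l}\neq 0$ is pinned, every surviving entry still carries enough multi-term freedom to be sent to zero independently. I therefore expect the residual work to be a bookkeeping argument, namely that the canceling edges chosen for distinct integrators are distinct and do not disturb the already-fixed value $\beta_{i_l}$; this follows from the acyclicity of the HDAG and the multilinear dependence of each $\mathcal{SWP}$ on its incoming-edge weights, via the layerwise recursion $\mathcal{SWP}(v)=\sum_{(p,v)\in\mathcal{E}} e_{pv}\,\mathcal{SWP}(p)$.
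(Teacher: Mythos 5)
Your proof takes essentially the same route as the paper's: necessity via the shared factor $\beta_{i_l}$ (zeroing the entry of an intermediator governed by $i_l$ forces $\beta_{i_l}=0$, which collapses the entry of $i_w$ to the invariably non-zero $\alpha_{i_w}$, and vice versa), and sufficiency by observing that, absent such intermediators, no entry of $c_k$ is a rigid WP-multiple of $\beta_{i_l}$, so all remaining multi-terms can be driven to zero. Your sufficiency direction is in fact more explicit than the paper's own (which simply asserts the simultaneous zeroability from \textit{Condition~\ref{unique_stem_condition}}), so the acknowledged bookkeeping step is not a gap relative to the paper's standard of rigor.
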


\begin{proof}
Fig.~\ref{fig_cond}(b) supports the understanding of this proof.
For \textit{only if} condition, in $c_k$ containing the \textit{multi-term} of $i_w$, 
suppose that there exists a \textit{multi-term} of an intermediator $j_l \in \mathcal{V}^{med}$ passing through $i_l$ from the leader.
Then, the \textit{multi-term} of $j_l$ is determined by the product of the \textit{multi-term} of $i_l$ and the WP of the path from $i_l$ to $j_l$. 
This prevents the \textit{multi-terms} of both $i_w$ and $j_l$ from being simultaneously zero, thereby ensuring $c_k$ cannot become a zero vector.
For \textit{if} condition, suppose that there is no \textit{multi-term} within $c_k$ corresponding to any intermediator passing through the integrator $i_l$ from the leader.
Since there exists a stem leading to $i_w$ that includes $i_l$, 
the column $c_k$ cannot include both the \textit{multi-term} of $i_w$ and any \textit{multi-term} that is derived from the \textit{multi-term} of $i_l$ by multiplication with a WP. 
Therefore, from \textit{Condition~\ref{unique_stem_condition}}, \textit{multi-terms} in $c_k$ can always be zero.
\end{proof}

\textcolor{black}{
With the above lemmas, we can conclude that 
for a specific column of the controllability matrix to become a zero vector, any non-zero elements must be \textit{multi-terms}.}
Moreover, the \textit{multi-terms} in this column must not include any intermediator passing through the integrator $i_l$.
This indicates that the presence or absence of such intermediators is crucial in determining whether the column can become a zero vector.

\begin{remark}\label{rem_mul}
Within the controllability matrix,
consider a column $c_k$ for $k\in\{1,...,n\}$ that satisfies \textit{Lemma~\ref{lem_mul}}.
Then, every node reachable from the leader in $(k-1)$-steps, is either an integrator or an intermediator, and the \textit{multi-terms} associated with these nodes can be zero.
Since nodes reachable from the leader in more than $(k-1)$-steps must pass through one of these integrators or intermediators, their \textit{multi-terms} are also trivially zeroable.
Consequently, if a specific column $c_k$ becomes a zero vector, then all subsequent columns in the controllability matrix can become zero vectors as well.
\end{remark}

\subsection{Method for Modified Controllability Matrix}\textcolor{black}{
For the controllability matrix $\mathcal{C}$ of a graph $\mathcal{G}(\mathcal{V},\mathcal{E})$, we propose a method to construct a modified controllability matrix $\bar{\mathcal{C}}$. 
Drawing insights from \textit{Lemma~\ref{lem_del}} and \textit{Lemma~\ref{lem_mul}}, 
the objective is to construct $\bar{\mathcal{C}}$ by specifically zeroing out the \textit{multi-terms} in $\mathcal{C}$ that potentially decrease its maximum rank.
The method begins with identifying the first column in $\mathcal{C}$ satisfying \textit{Lemma~\ref{lem_mul}}, which can become a zero vector. 
Once this column is identified, as discussed in \textit{Remark~\ref{rem_mul}}, 
the next step involves zeroing out the identified column and all subsequent columns. 
This targeted removal effectively simplifies the matrix by eliminating elements that contribute to its maximum rank, thereby maintaining the minimum rank characteristics of $\mathcal{C}$.
By applying this procedure, the modified controllability matrix $\bar{\mathcal{C}}$ is obtained, which retains the essential properties needed for further analysis while discarding unnecessary complexities.}
The following theorem provides the relationship between the maximum rank of $\bar{\mathcal{C}}$ and the minimum rank of $\mathcal{C}$:

\begin{theorem}\label{thm_multi}
The minimum rank of $\mathcal{C}$ and the maximum rank of the modified controllability matrix $\bar{\mathcal{C}}$ are identical.
\end{theorem}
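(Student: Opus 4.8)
The plan is to collapse both quantities to a single combinatorial count -- the number of non-zero columns -- and the pivot for this is \textit{Assumption~\ref{assum}}. In an HDAG with a single leader every row of the controllability matrix holds at most one non-zero entry, so any two distinct columns have disjoint row supports. A vanishing combination $\sum_j \lambda_j c_j = 0$ then permits no cross-column cancellation, forcing $\lambda_j = 0$ for every non-zero $c_j$; hence, for \emph{any} realization of the parameters, $\mathrm{rank}(\mathcal{C})$ equals the number of its non-zero columns, and the same holds verbatim for $\bar{\mathcal{C}}$. With this, a rank computation becomes a column-counting exercise, and the theorem reduces to showing that $\mathcal{C}$ in its rank-minimizing realization and $\bar{\mathcal{C}}$ in its rank-maximizing realization possess the same number of non-zero columns.

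First I would treat the minimum rank of $\mathcal{C}$, which equals the total number of columns minus the largest set of columns that can simultaneously be driven to zero. A column carrying a \textit{single-term} is invariably non-zero by \textit{Definition~\ref{def_term}}, whereas a column made entirely of \textit{multi-terms} is governed by \textit{Lemma~\ref{lem_mul}}. Taking $c_k$ to be the first column satisfying \textit{Lemma~\ref{lem_mul}}, that column can be zeroed, and \textit{Remark~\ref{rem_mul}} propagates this forward so that the whole suffix $c_k, c_{k+1}, \ldots, c_n$ vanishes together; \textit{Lemma~\ref{lem_del}} guarantees that the auxiliary zeroing of the multi-term of $i_l$ required along the way never collapses an earlier column. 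Because $c_k$ is, by definition, the \emph{first} zeroable column, none of $c_1, \ldots, c_{k-1}$ can become the zero vector under any parameter choice. The minimum number of non-zero columns is therefore exactly $k-1$, whence $\min \mathrm{rank}(\mathcal{C}) = k-1$.

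Next I would compute the maximum rank of $\bar{\mathcal{C}}$. By construction $\bar{\mathcal{C}}$ coincides with $\mathcal{C}$ except that columns $c_k, \ldots, c_n$ are set identically to zero, while $c_1, \ldots, c_{k-1}$ retain their parameters. For generic parameters no weight-product cancels, so each of these $k-1$ surviving columns is non-zero, and by the disjoint-support observation they are linearly independent; thus $\max \mathrm{rank}(\bar{\mathcal{C}}) = k-1$. Comparing the two counts gives $\min \mathrm{rank}(\mathcal{C}) = k-1 = \max \mathrm{rank}(\bar{\mathcal{C}})$, which is precisely the assertion.

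The step I expect to be the main obstacle is certifying the prefix/suffix dichotomy rigorously: that the suffix $c_k,\ldots,c_n$ can be zeroed \emph{jointly} (not merely column by column, despite edge weights shared across columns) and that the prefix $c_1,\ldots,c_{k-1}$ is \emph{genuinely} non-zeroable for every parameter assignment. The joint zeroing is exactly the forward-propagation content of \textit{Remark~\ref{rem_mul}} together with \textit{Lemma~\ref{lem_del}}, while the non-zeroability of the prefix rests on $c_k$ being the first column satisfying \textit{Lemma~\ref{lem_mul}} -- so that whatever assignment annihilates the suffix cannot incidentally annihilate a prefix column, since such a column admits no annihilating assignment at all. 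Securing these two points is what makes the two column counts provably coincide.
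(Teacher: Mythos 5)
Your proof is correct and follows essentially the same route as the paper's: both rest on \textit{Assumption~\ref{assum}} (each row of $\mathcal{C}$ has at most one non-zero entry), use \textit{Lemma~\ref{lem_del}} to dismiss columns containing a \textit{single-term}, \textit{Lemma~\ref{lem_mul}} to characterize exactly which all-\textit{multi-term} columns can vanish, and \textit{Remark~\ref{rem_mul}} to propagate the zeroing to all subsequent columns. Your write-up merely makes explicit two points the paper leaves implicit, namely that the rank equals the number of non-zero columns (disjoint row supports) and that the suffix can be annihilated jointly without disturbing the non-zeroable prefix $c_1,\ldots,c_{k-1}$.
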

\begin{proof}
Under \textit{Assumption~\ref{assum}}, each row of $\mathcal{C}$ is either a zero vector or contains exactly one non-zero element. 
From \textit{Lemma~\ref{lem_del}}, we know that the maximum rank of $\mathcal{C}$ decreases only when a column composed entirely of \textit{multi-terms} becomes zero.
Given that columns $c_k$ for $k \in \{1, \ldots, n\}$ containing only \textit{single-terms} do not decrease the maximum rank, we can categorize the columns into the following two cases:

\begin{itemize}
\item \textbf{Case 1:} $c_k$ contains both \textit{single-term} and \textit{multi-term}.
\item \textbf{Case 2:} $c_k$ contains only \textit{multi-term}.
\end{itemize}
In \textbf{Case 1}, even assuming \textit{multi-terms} to be zero does not decrease the maximum rank due to the remaining \textit{single-term}, as outlined in \textit{Lemma~\ref{lem_del}}. 
For \textbf{Case 2}, suppose that $c_k$ does not satisfy \textit{Lemma~\ref{lem_mul}}. 
It follows that there exists a \textit{multi-term} in $c_k$ that cannot be zero, which means that $c_k$ cannot become a zero vector, and therefore does not decrease the maximum rank.
Conversely, suppose that $c_k$ satisfies \textit{Lemma~\ref{lem_mul}}.
Then, all \textit{multi-terms} within a column can be zero, transforming such a column into a zero vector, thereby always reducing the maximum rank.
Therefore, the modified controllability matrix $\bar{\mathcal{C}}$ by zeroing only the \textit{multi-terms} that contribute to the decrease of the maximum rank, 
ensures that its maximum rank reflects the minimum rank of the original matrix $\mathcal{C}$.
\end{proof}

Note that since $\bar{\mathcal{C}}$ is constructed by eliminating \textit{multi-terms} in $\mathcal{C}$ that contribute to the decrease of the maximum rank, 
the minimum and maximum ranks of $\bar{\mathcal{C}}$ are identical.

\subsection{Subgraph Construction for the Dimension of SSCS}
The process of constructing the modified controllability matrix offers insights from both a matrix perspective and a graph-theoretical viewpoint. 
Leveraging this dual perspective, we can construct a subgraph $\bar{\mathcal{G}}(\mathcal{V},\bar{\mathcal{E}})$ that parallels the construction of the modifed controllability matrix, 
as detailed in \textit{Algorithm~\ref{alg_sub}}.

\begin{algorithm}[h]
\caption{\textcolor{black}{Constructing Subgraph Corresponding to $\bar{C}$}}\label{alg_sub}
\begin{algorithmic}[1]
\State \textbf{Input:} A graph $\mathcal{G}(\mathcal{V},\mathcal{E})$ with $|\mathcal{V}|=n$
\State Initialize $\mathcal{E}'= \emptyset$ 

\For{$k = 1$ \textbf{to} $n-1$}  \Comment{\textbf{Stage 1}}
    \State $\mathcal{V}_k \leftarrow$ nodes reachable from the leader with $k$-steps
    \If{$\mathcal{V}_k$ consists only of integrators or intermediators, \newline \hspace*{0.77cm} with no intermediators satisfying \textit{Lemma~\ref{lem_mul}}}
        \State $k_{first} \leftarrow k$ and \textbf{break} 
    \EndIf
\EndFor 

\For{$k = k_{first}$ \textbf{to} $n-1$} \Comment{\textbf{Stage 2}}
    \State $\mathcal{V}_k \leftarrow$ nodes reachable from the leader with $k$-steps
    \State $\mathcal{E}_k \leftarrow$ edges connected with nodes in $\mathcal{V}_k$
    \State $\mathcal{E}'\leftarrow \mathcal{E}' \cup \mathcal{E}_k$ 
\EndFor 
\State $\bar{\mathcal{E}} \leftarrow \mathcal{E}\setminus\mathcal{E}'$
\State \textbf{Output:} Subgraph $\bar{\mathcal{G}}(\mathcal{V},\bar{\mathcal{E}})$
\end{algorithmic}
\end{algorithm}




\textcolor{black}{
\textit{Algorithm~\ref{alg_sub}} mirrors the stages outlined in the process of constructing the modified controllability matrix, translating matrix modifications into graph-based modifications. 
The initial stage involves identifying the first step from the leader where only integrators or intermediators satisfying \textit{Lemma~\ref{lem_mul}} are reachable.
Following the identification of the first step, and based on \textit{Remark~\ref{rem_mul}}, 
the second stage concentrates on all nodes that become reachable beyond this step.
This leads to the removal of edges connected to these nodes, resulting in the construction of $\bar{\mathcal{G}}(\mathcal{V},\bar{\mathcal{E}})$, 
a subgraph that reflects the modifications applied to the controllability matrix. 
This ensures that the structural characteristics of the subgraph are aligned with those of the modified controllability matrix $\bar{\mathcal{C}}$.
}


Drawing from \textit{Theorem~\ref{thm_multi}}, this integration of graph theory and matrix analysis leads us to the following corollary.

\begin{corollary}\label{cor_SSCS}
The dimension of SSCS of a graph $\mathcal{G}(\mathcal{V},\mathcal{E})$ is identical to 
the dimension of SCS of the subgraph $\bar{\mathcal{G}}(\mathcal{V},\bar{\mathcal{E}})$ obtained from \textit{Algorithm~\ref{alg_sub}}.
\end{corollary}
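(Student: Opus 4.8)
The plan is to prove the corollary by chaining three identities that connect the SSCS dimension of $\mathcal{G}$, the ranks of the two controllability matrices $\mathcal{C}$ and $\bar{\mathcal{C}}$, and the SCS dimension of $\bar{\mathcal{G}}$. First I would observe that, by definition, the dimension of SSCS equals the minimum rank of the controllability matrix $\mathcal{C}$: the controllable subspace is the column space of $\mathcal{C}$, its dimension is $\text{rank}(\mathcal{C})$, and the SSCS $\mathcal{C}_\Gamma$ is by construction the controllable subspace of minimum dimension, as reflected in the boundary condition~\eqref{ex_1_dim}. Hence it suffices to relate the minimum rank of $\mathcal{C}$ to the SCS dimension of $\bar{\mathcal{G}}$.

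Next I would invoke \emph{Theorem~\ref{thm_multi}}, which already equates the minimum rank of $\mathcal{C}$ with the maximum rank of the modified matrix $\bar{\mathcal{C}}$. What remains is to identify the maximum rank of $\bar{\mathcal{C}}$ with the SCS dimension of $\bar{\mathcal{G}}$. For this I would use that the SCS dimension of any graph is, by definition, the maximum rank of its controllability matrix, with \emph{Proposition~\ref{thm_hosoe}} (Hosoe) supplying the graph-theoretic computation via a disjoint stem-and-cycle cover (which reduces to a disjoint stem cover here, since $\bar{\mathcal{G}}$ is acyclic). Thus the SCS dimension of $\bar{\mathcal{G}}$ equals the maximum rank of the controllability matrix of $\bar{\mathcal{G}}$, and the conclusion follows provided $\bar{\mathcal{C}}$ is genuinely that controllability matrix.

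Establishing this last point is the crux and the main obstacle. I would argue that the column-zeroing that defines $\bar{\mathcal{C}}$ is mirrored exactly by the edge deletion $\bar{\mathcal{E}} = \mathcal{E}\setminus\mathcal{E}'$ in \emph{Algorithm~\ref{alg_sub}}. The argument rests on the HDAG structure of \emph{Assumption~\ref{assum}}: the layers $\mathcal{V}_1,\dots,\mathcal{V}_p$ are mutually disjoint and edges join only consecutive layers. By~\eqref{eq_cont_meaning}, the column indexed $k+1$ of $\mathcal{C}$ collects the SWP entries of the nodes in layer $\mathcal{V}_k$, so zeroing the columns associated with the deeper layers corresponds to deleting the edges incident to those layers. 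Because edges are confined to consecutive layers, removing every edge incident to $\mathcal{V}_{k_{first}},\dots,\mathcal{V}_{p}$ renders precisely those nodes unreachable from the leader while leaving the reachability of the shallower layers $\mathcal{V}_1,\dots,\mathcal{V}_{k_{first}-1}$ intact. Consequently the controllability matrix of $\bar{\mathcal{G}}$ agrees column-by-column with $\bar{\mathcal{C}}$: the early columns are unchanged, and the columns associated with layers $\mathcal{V}_{k_{first}}$ and deeper are zero.

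Finally I would verify that the selection rule for $k_{first}$ in Stage~1 matches the column-zeroing criterion of \emph{Lemma~\ref{lem_mul}} and \emph{Remark~\ref{rem_mul}}: the first layer composed solely of integrators and intermediators, with no intermediator passing through the integrator $i_l$ from the leader, is exactly the layer whose associated column can first become a zero vector, and \emph{Remark~\ref{rem_mul}} ensures that all deeper layers (equivalently, all later columns) can then be zeroed simultaneously. Combining the three identities, $\dim(\text{SSCS of }\mathcal{G})$ equals the minimum rank of $\mathcal{C}$, which equals the maximum rank of $\bar{\mathcal{C}}$, which equals $\dim(\text{SCS of }\bar{\mathcal{G}})$, yielding the corollary.
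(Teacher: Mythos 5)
Your proposal is correct and follows essentially the same route the paper intends: the corollary is presented there as a direct consequence of chaining $\dim(\text{SSCS of }\mathcal{G}) = \min\operatorname{rank}(\mathcal{C}) = \max\operatorname{rank}(\bar{\mathcal{C}}) = \dim(\text{SCS of }\bar{\mathcal{G}})$, using \emph{Theorem~\ref{thm_multi}} for the middle equality and the correspondence between \emph{Algorithm~\ref{alg_sub}} and the column-zeroing construction (via \emph{Lemma~\ref{lem_mul}}, \emph{Remark~\ref{rem_mul}}, and the HDAG layer structure) for the last. Your write-up merely makes explicit the column-by-column identification of $\bar{\mathcal{C}}$ with the controllability matrix of $\bar{\mathcal{G}}$, which the paper leaves implicit.
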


The above corollary implies that the dimension of SSCS for a graph $\mathcal{G}(\mathcal{V},\mathcal{E})$ 
can be determined by applying \textit{Proposition~\ref{thm_hosoe}} to its subgraph $\bar{\mathcal{G}}(\mathcal{V},\bar{\mathcal{E}})$.
This foundation allows us to establish the following proposition to determine the dimension of SSCS:

\begin{proposition}\label{thm_minimum}\textcolor{black}{
For a graph $\mathcal{G}(\mathcal{V},\mathcal{E})$,
the dimension of SSCS is the maximum number of state nodes that can be covered by a disjoint set of stems in the subgraph $\bar{\mathcal{G}}({\mathcal{V}},\bar{\mathcal{E}})$.
}
\end{proposition}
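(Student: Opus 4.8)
The plan is to chain together the two results already in place: \textit{Corollary~\ref{cor_SSCS}}, which equates the dimension of SSCS of $\mathcal{G}(\mathcal{V},\mathcal{E})$ with the dimension of SCS of the subgraph $\bar{\mathcal{G}}(\mathcal{V},\bar{\mathcal{E}})$, and \textit{Proposition~\ref{thm_hosoe}} (Hosoe's theorem), which characterizes the SCS dimension of any graph as the maximum number of state nodes coverable by a disjoint set of stems and cycles. Applying \textit{Proposition~\ref{thm_hosoe}} directly to $\bar{\mathcal{G}}$ and then substituting via \textit{Corollary~\ref{cor_SSCS}} yields an expression for the SSCS dimension of $\mathcal{G}$ in terms of a disjoint set of stems and cycles of $\bar{\mathcal{G}}$. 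The remaining work is to argue that the cycle contribution vanishes, so that only stems survive.

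The key simplification comes from the acyclicity of $\bar{\mathcal{G}}$. First I would observe that, by \textit{Assumption~\ref{assum}}, $\mathcal{G}$ is an HDAG and hence a DAG, containing no cycles. Next I would note that \textit{Algorithm~\ref{alg_sub}} constructs $\bar{\mathcal{G}}$ purely by deleting edges from $\mathcal{G}$, since $\bar{\mathcal{E}} = \mathcal{E}\setminus\mathcal{E}'$; because removing edges from an acyclic graph cannot introduce any cycle, $\bar{\mathcal{G}}$ remains acyclic. Consequently, in the disjoint set of stems and cycles guaranteed by \textit{Proposition~\ref{thm_hosoe}} applied to $\bar{\mathcal{G}}$, the cycle component is necessarily empty, and the maximizing covering set consists solely of stems.

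Putting these together, the maximum number of nodes covered by a disjoint set of stems and cycles in $\bar{\mathcal{G}}$ coincides with the maximum number covered by a disjoint set of stems alone. By \textit{Corollary~\ref{cor_SSCS}}, this quantity equals the dimension of SSCS of $\mathcal{G}$, which is exactly the claimed statement. The argument is therefore a specialization of Hosoe's theorem to the acyclic subgraph produced by the edge-removal procedure.

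I expect the main point requiring care to be not a deep obstacle but the verification that the edge-deletion in \textit{Algorithm~\ref{alg_sub}} genuinely preserves acyclicity, so that the cycle term in Hosoe's formula legitimately drops out. A secondary bookkeeping concern is that, after edge removal, some nodes may no longer be reachable from the leader and thus cannot lie on any stem; such nodes simply contribute nothing to the coverage count, which is consistent with the maximum-coverage formulation and does not affect the conclusion.
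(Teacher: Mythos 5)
Your proposal is correct and follows essentially the same route as the paper, which presents \textit{Proposition~\ref{thm_minimum}} as the direct consequence of chaining \textit{Corollary~\ref{cor_SSCS}} (dimension of SSCS of $\mathcal{G}$ equals dimension of SCS of $\bar{\mathcal{G}}$) with \textit{Proposition~\ref{thm_hosoe}} applied to the subgraph. Your explicit verification that $\bar{\mathcal{G}}$ remains acyclic under the edge deletions of \textit{Algorithm~\ref{alg_sub}}, so that the cycle term in Hosoe's characterization drops out and only stems remain, is precisely the step the paper leaves implicit.
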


\textcolor{black}{
From \textit{Theorem~\ref{thm_FSSC}} and \textit{Proposition~\ref{thm_minimum}}, we now have a clear pathway to find FSSC nodes in a graph.
For an HDAG, the determination of FSSC nodes by \textit{Theorem~\ref{thm_FSSC}} 
involves analyzing how the dimension of SSCS changes when a state node becomes a leader with additional input.}
For a detailed comparison between the zero forcing set-based lower bounds of the dimension of SSCS in \cite{monshizadeh2015strong,yaziciouglu2020strong,yazicioglu2012tight,yaziciouglu2016graph} 
and the results of \textit{Proposition~\ref{thm_minimum}}, as well as its complexity analysis, please see \textit{Sections~IV} and \textit{V.B} in the Supplementary Materials.

\section{Examples and Applications}\label{sec_ex}
\textcolor{black}{
This section presents a topological example to illustrate the determination of FSC and FSSC nodes and discusses their applications. 
Consider a graph $\mathcal{G}(\mathcal{V},\mathcal{E})$ and its corresponding subgraph $\bar{\mathcal{G}}(\mathcal{V},\bar{\mathcal{E}})$ as shown in Fig.~\ref{fig3}.
Let the sets of FSC and FSSC nodes be denoted by $\mathcal{V}^{\text{\tiny FSC}}$ and $\mathcal{V}^{\text{\tiny FSSC}}$, respectively.
From \textit{Proposition~\ref{thm_commault}}, we identify the FSC nodes as $\mathcal{V}^{\text{\tiny FSC}}=\{1,2,13\}$, 
where the dimension of SCS does not increase when these nodes become leaders.
To determine FSSC nodes, we analyze the subgraph $\bar{\mathcal{G}}(\mathcal{V},\bar{\mathcal{E}})$, constructed by removing edges connected to nodes reachable from the leader in or after $4$-steps,
as illustrated in Fig.\ref{fig3}(b).
Applying \textit{Proposition~\ref{thm_minimum}} on this subgraph gives the SSCS dimension for the original graph $\mathcal{G}(\mathcal{V},\mathcal{E})$.
Hence, by \textit{Theorem~\ref{thm_FSSC}}, the FSSC nodes are those that do not increase the SSCS dimension when they become leaders, resulting in $\mathcal{V}^{\text{\tiny FSSC}}=\{1,2\}$.
}

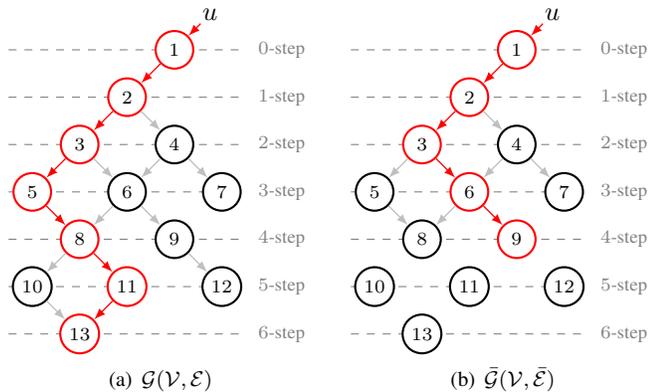
\begin{figure}[t]
\centering
\subfigure[${\mathcal{G}}(\mathcal{V},{\mathcal{E}})$]{
\begin{tikzpicture}[scale=0.63]
\node[] at (2.25,5.75) {$u$};
\node[] (node0) at (2.25,5.75) [] {};

\draw[dashed, gray] (-2,5) -- (3,5) node[left, pos=1.29] { \scriptsize$0$-step};
\draw[dashed, gray] (-2,4) -- (3,4) node[left, pos=1.29] { \scriptsize$1$-step};
\draw[dashed, gray] (-2,3) -- (3,3) node[left, pos=1.29] { \scriptsize$2$-step};
\draw[dashed, gray] (-2,2) -- (3,2) node[left, pos=1.29] { \scriptsize$3$-step};
\draw[dashed, gray] (-2,1) -- (3,1) node[left, pos=1.29] { \scriptsize$4$-step};
\draw[dashed, gray] (-2,0) -- (3,0) node[left, pos=1.29] { \scriptsize$5$-step};
\draw[dashed, gray] (-2,-1) -- (3,-1) node[left, pos=1.29] { \scriptsize$6$-step};

\node[place, draw=red, circle,minimum size=0.5cm] (node1) at (1.5,5) [] {\scriptsize$1$}; 
\node[place, draw=red, circle,minimum size=0.5cm] (node2) at (0.5,4) [] {\scriptsize$2$};
\node[place, draw=red, circle,minimum size=0.5cm] (node3) at (-0.5,3) [] {\scriptsize$3$};
\node[place, circle,minimum size=0.5cm] (node4) at (1.5,3) [] {\scriptsize$4$};
\node[place, draw=red, circle,minimum size=0.5cm] (node5) at (-1.5,2) [] {\scriptsize$5$};
\node[place, circle,minimum size=0.5cm] (node6) at (0.5,2) [] {\scriptsize$6$};
\node[place, circle,minimum size=0.5cm] (node7) at (2.5,2) [] {\scriptsize$7$};
\node[place, draw=red, circle,minimum size=0.5cm] (node8) at (-0.5,1) [] {\scriptsize$8$};
\node[place, circle,minimum size=0.5cm] (node9) at (1.5,1) [] {\scriptsize$9$};
\node[place, circle,minimum size=0.5cm] (node10) at (-1.5,0) [] {\scriptsize$10$};
\node[place, draw=red, circle,minimum size=0.5cm] (node11) at (0.5,0) [] {\scriptsize$11$};
\node[place, circle,minimum size=0.5cm] (node12) at (2.5,0) [] {\scriptsize$12$};
\node[place, draw=red, circle,minimum size=0.5cm] (node13) at (-0.5,-1) [] {\scriptsize$13$};

\draw (node0) [-latex, red, line width=0.5pt] -- node [right]  {} (node1);
\draw (node1) [-latex, red, line width=0.5pt] -- node [right]  {} (node2);
\draw (node2) [-latex, red, line width=0.5pt] -- node [right]  {} (node3);
\draw (node2) [-latex, lightgray, line width=0.5pt] -- node [right]  {} (node4);
\draw (node3) [-latex, red, line width=0.5pt] -- node [right]  {} (node5);
\draw (node3) [-latex, lightgray, line width=0.5pt] -- node [right]  {} (node6);
\draw (node4) [-latex, lightgray, line width=0.5pt] -- node [right]  {} (node6);
\draw (node4) [-latex, lightgray, line width=0.5pt] -- node [right]  {} (node7);
\draw (node5) [-latex, red, line width=0.5pt] -- node [right]  {} (node8);
\draw (node6) [-latex, lightgray, line width=0.5pt] -- node [right]  {} (node8);
\draw (node6) [-latex, lightgray, line width=0.5pt] -- node [right]  {} (node9);
\draw (node8) [-latex, lightgray, line width=0.5pt] -- node [right]  {} (node10);
\draw (node8) [-latex, red, line width=0.5pt] -- node [right]  {} (node11);
\draw (node9) [-latex, lightgray, line width=0.5pt] -- node [right]  {} (node12);
\draw (node10) [-latex, lightgray, line width=0.5pt] -- node [right]  {} (node13);
\draw (node11) [-latex, red, line width=0.5pt] -- node [right]  {} (node13);
\end{tikzpicture}
}\,\,
\subfigure[$\bar{\mathcal{G}}(\mathcal{V},\bar{\mathcal{E}})$]{
\begin{tikzpicture}[scale=0.63]
\node[] at (2.25,5.75) {$u$};
\node[] (node0) at (2.25,5.75) [] {};

\draw[dashed, gray] (-2,5) -- (3,5) node[left, pos=1.29] { \scriptsize$0$-step};
\draw[dashed, gray] (-2,4) -- (3,4) node[left, pos=1.29] { \scriptsize$1$-step};
\draw[dashed, gray] (-2,3) -- (3,3) node[left, pos=1.29] { \scriptsize$2$-step};
\draw[dashed, gray] (-2,2) -- (3,2) node[left, pos=1.29] { \scriptsize$3$-step};
\draw[dashed, gray] (-2,1) -- (3,1) node[left, pos=1.29] { \scriptsize$4$-step};
\draw[dashed, gray] (-2,0) -- (3,0) node[left, pos=1.29] { \scriptsize$5$-step};
\draw[dashed, gray] (-2,-1) -- (3,-1) node[left, pos=1.29] { \scriptsize$6$-step};

\node[place, draw=red, circle,minimum size=0.5cm] (node1) at (1.5,5) [] {\scriptsize$1$}; 
\node[place, draw=red, circle,minimum size=0.5cm] (node2) at (0.5,4) [] {\scriptsize$2$};
\node[place, draw=red, circle,minimum size=0.5cm] (node3) at (-0.5,3) [] {\scriptsize$3$};
\node[place, circle,minimum size=0.5cm] (node4) at (1.5,3) [] {\scriptsize$4$};
\node[place, circle,minimum size=0.5cm] (node5) at (-1.5,2) [] {\scriptsize$5$};
\node[place, draw=red, circle,minimum size=0.5cm] (node6) at (0.5,2) [] {\scriptsize$6$};
\node[place, circle,minimum size=0.5cm] (node7) at (2.5,2) [] {\scriptsize$7$};
\node[place, circle,minimum size=0.5cm] (node8) at (-0.5,1) [] {\scriptsize$8$};
\node[place, draw=red, circle,minimum size=0.5cm] (node9) at (1.5,1) [] {\scriptsize$9$};
\node[place, circle,minimum size=0.5cm] (node10) at (-1.5,0) [] {\scriptsize$10$};
\node[place, circle,minimum size=0.5cm] (node11) at (0.5,0) [] {\scriptsize$11$};
\node[place, circle,minimum size=0.5cm] (node12) at (2.5,0) [] {\scriptsize$12$};
\node[place, circle,minimum size=0.5cm] (node13) at (-0.5,-1) [] {\scriptsize$13$};

\draw (node0) [-latex, red, line width=0.5pt] -- node [right]  {} (node1);
\draw (node1) [-latex, red, line width=0.5pt] -- node [right]  {} (node2);
\draw (node2) [-latex, red, line width=0.5pt] -- node [right]  {} (node3);
\draw (node2) [-latex, lightgray, line width=0.5pt] -- node [right]  {} (node4);
\draw (node3) [-latex, lightgray, line width=0.5pt] -- node [right]  {} (node5);
\draw (node3) [-latex, red, line width=0.5pt] -- node [right]  {} (node6);
\draw (node4) [-latex, lightgray, line width=0.5pt] -- node [right]  {} (node6);
\draw (node4) [-latex, lightgray, line width=0.5pt] -- node [right]  {} (node7);
\draw (node5) [-latex, lightgray, line width=0.5pt] -- node [right]  {} (node8);
\draw (node6) [-latex, lightgray, line width=0.5pt] -- node [right]  {} (node8);
\draw (node6) [-latex, red, line width=0.5pt] -- node [right]  {} (node9);
\end{tikzpicture}
}
\caption{
\textcolor{black}{
A graph $\mathcal{G}(\mathcal{V},\mathcal{E})$ and its corresponding subgraph $\bar{\mathcal{G}}(\mathcal{V},\bar{\mathcal{E}})$ obtained from \textit{Algorithm~\ref{alg_sub}}:
(a) From \textit{Proposition~\ref{thm_hosoe}}, the dimension of SCS for $\mathcal{G}(\mathcal{V},\mathcal{E})$ is 7.
(b) From \textit{Proposition~\ref{thm_minimum}}, the dimension of SSCS for $\mathcal{G}(\mathcal{V},\mathcal{E})$ is 5.
}}
\label{fig3}
\end{figure}

\textcolor{black}{
The concepts of FSC and FSSC nodes offer a framework for assessing node controllability and importance, especially in the face of network parameter variations.
FSC nodes provide baseline controllability, but FSSC nodes maintain consistent controllability across all network parameters.
This distinction is critical in task or role assignment based on node significance.
For instance, in a network of multiple agents, prioritizing FSSC nodes for crucial tasks enhances operational efficiency and system resilience against parameter fluctuations.
Hence, FSC and FSSC nodes can serve as metrics for establishing a hierarchy of node importance, optimizing task distribution, and ensuring a robust control system resilient to diverse parameter changes.
}

\section{Conclusion} \label{sec_conc}
This paper introduced the concept of FSSCS and its associated FSSC nodes. 
Through a combination of graph-theoretical and controllability matrix analyses, we have provided a comprehensive understanding of the strong structural controllability of individual nodes. 
This approach not only extends the existing framework of FSC nodes
but also offers new insights into the robustness of network controllability against variations in network parameters. 
Our findings have enabled the complete characterization of the controllable subspace within structured networks and established a solid foundation for understanding the controllability of individual nodes. 
While the exact determination of the SSCS dimension is currently limited to HDAGs, we conjecture that similar principles may hold for general directed acyclic graphs. 
\textcolor{black}{
Additionally, achieving the full characterization of FSSCS, including proof of how it is generated in general graphs, remains an open challenge. 
This requires developing methods to determine the dimension of SSCS for arbitrary graph structures, which will be a critical focus of our future work.
}

\bibliographystyle{unsrt}
\bibliography{references_manuscript}

\end{document}